\newcommand{\C}{\mathbb{C}}
\newcommand{\ZZ}{\mathbb{Z}}
\newcommand{\QQ}{\mathbb{Q}}
\newcommand{\NN}{\mathbb{N}}
\newcommand{\PP}{\mathbb{P}}
\newcommand{\LL}{\mathbb{L}}
\newcommand{\OO}{\mathcal O}
\newcommand{\DD}{\mathcal D}
\newcommand{\XX}{\mathcal X}
\newcommand{\YY}{\mathcal Y}
\newcommand{\Z}{\mathcal Z}
\newcommand{\MM}{\mathcal M}
\newcommand{\gr}{\hbox{Gr}}
\newcommand{\wt}{\widetilde}
\newcommand{\ima}{\hbox{Im}}
\newcommand{\rom}{\romannumeral}
\newcommand\undermat[2]{
  \makebox[0pt][l]{$\smash{\underbrace{\phantom{
    \begin{matrix}#2\end{matrix}}}_{\text{$#1$}}}$}#2}
\newtheorem{theorem}{Theorem}[section]
\newtheorem{lemma}[theorem]{Lemma}
\newtheorem{corollary}[theorem]{Corollary}
\newtheorem{proposition}[theorem]{Proposition}
\newtheorem{conjecture}[theorem]{Conjecture}
\newtheorem{remark}[theorem]{Remark}
\newtheorem{definition}[theorem]{Definition}
\newtheorem{convention}{Conventions}
\newtheorem{notation}[theorem]{Notation}
\newtheorem{nonumbering}{Theorem}
\newtheorem{nonumberingc}{Corollary}
\newtheorem{nonumberingt}{Acknowledgements}
\begin{document}
\author[Robert Laterveer]
{Robert Laterveer}

\address{Institut de Recherche Math\'ematique Avanc\'ee,
CNRS -- Universit\'e 
de Strasbourg,\
7 Rue Ren\'e Des\-car\-tes, 67084 Strasbourg CEDEX,
FRANCE.}
\email{robert.laterveer@math.unistra.fr}

\title{A family of cubic fourfolds with finite--dimensional motive}

\begin{abstract} We prove that cubic fourfolds in a certain $10$--dimensional family have finite--dimensional motive. The proof is based on the van Geemen--Izadi construction of an algebraic Kuga--Satake correspondence for these cubic fourfolds, combined with Voisin's method of ``spread''. Some consequences are given.
 \end{abstract}

\keywords{Algebraic cycles, Chow groups, motives, finite--dimensional motives, cubic fourfolds, abelian varieties, Kuga--Satake correspondence}

\subjclass{Primary 14C15, 14C25, 14C30. Secondary 14K99}

\maketitle

\section{Introduction}

The notion of finite--dimensional motive, developed independently by Kimura and O'Sullivan \cite{Kim}, \cite{An}, \cite{MNP}, \cite{J4}, \cite{Iv} has given considerable new impetus to the study of algebraic cycles. To give but one example: thanks to this notion, we now know the Bloch conjecture is true for surfaces of geometric genus zero that are rationally dominated by a product of curves \cite{Kim}. It thus seems worthwhile to find concrete examples of varieties that have finite--dimensional motive, this being (at present) one of the sole means of arriving at a satisfactory understanding of Chow groups. 

The object of the present note is to add to the list of examples of varieties with finite--dimensional motive, by considering cubic fourfolds over $\C$. There is one famous cubic fourfold with finite--dimensional motive: the Fermat cubic
  \[  x_0^3+x_1^3+x_2^3+x_3^3+x_4^3+x_5^3=0\ .\]
  The Fermat cubic has finite--dimensional motive because it is rationally dominated by a product of (Fermat) curves, and the indeterminacy locus is again of Fermat type \cite{Shi}.
  
  The main result of this note proves finite--dimensionality for a $10$--dimensional family of cubic fourfolds containing the Fermat cubic:
 
 \begin{nonumbering}[=theorem \ref{main}] 
Let $X\subset\PP^5(\C)$ be a smooth cubic fourfold, defined by an equation
  \[  f(x_0,\ldots,x_4)+x_5^3=0\ ,\]
  where $f(x_0,\ldots,x_4)$ defines a smooth cubic threefold. Then $X$ has finite--dimensional motive.
\end{nonumbering}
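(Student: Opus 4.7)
The plan is to exploit the cyclic triple-cover structure of $X$ over $\PP^4$, combine it with the algebraic Kuga--Satake correspondence of van Geemen--Izadi, and invoke Voisin's spread technique to bridge the gap between cohomology and Chow motives.

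\emph{Cyclic decomposition.} The group $\mu_3=\langle\zeta\rangle$ acts on $X$ by $x_5\mapsto\zeta x_5$, exhibiting $X$ as a cyclic triple cover of $\PP^4$ ramified along the smooth cubic threefold $Y=\{f=0\}$. The $\mu_3$-averaging projector and its complement yield mutually orthogonal idempotents in $CH^4(X\times X)_\QQ$ and a motivic splitting $h(X)=h(X)^{+}\oplus h(X)^{-}$, where $h(X)^{+}\cong h(\PP^4)$ is automatically finite-dimensional. It thus suffices to establish finite-dimensionality of $h(X)^{-}$, whose cohomology is concentrated in degree $4$ and carries the primitive/transcendental Hodge structure of $X$.

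\emph{Algebraic Kuga--Satake and spread.} The primitive Hodge structure of $X$ is of K3 type, and van Geemen--Izadi produce, for cubic fourfolds of exactly this shape, an abelian variety $\mathcal{A}$ and an algebraic correspondence $\Gamma\in CH^{*}(X\times\mathcal{A})_\QQ$ realising the Kuga--Satake embedding $H^{4}_{\mathrm{tr}}(X)(2)\hookrightarrow H^{2}(\mathcal{A},\QQ)$. A priori this yields only a cohomological splitting. To promote it to a motivic direct summand---after which finite-dimensionality of $h(\mathcal{A})$ (classical for abelian varieties) would pass to $h(X)^{-}$---I would globalise the construction over the $10$-dimensional parameter space $B$ of the family: let $\XX\to B$ be the universal cubic fourfold, $\mathcal{A}\to B$ the relative Kuga--Satake abelian scheme, and $\wt\Gamma\in CH^{*}(\XX\times_B\mathcal{A})_\QQ$ the spread of $\Gamma$. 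Voisin's spread principle, applied to the defect $\wt\Gamma^{\vee}\circ\wt\Gamma$ minus the relative transcendental projector on $\XX/B$---a class that is cohomologically trivial on every fibre---is designed precisely to show that this defect vanishes in Chow on a general fibre.

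\emph{Main obstacle.} The hard step is this last one: converting a fibrewise cohomological identity into an equality in Chow groups. The key input is a Bloch--Srinivas-type decomposition of $CH^{*}(\XX\times_B\mathcal{A})_\QQ$ into pieces that are either controlled by the Hodge-theoretic output of van Geemen--Izadi or supported over proper closed subschemes of $B$, together with enough vanishing to kill the resulting ``error'' cycles on a general fibre. It is here that the special geometry of the $10$-dimensional family---and the existence of the Fermat cubic within it as a reference point with known finite-dimensional motive, which can be used to pin down the behaviour of the relevant correspondences on a distinguished fibre---enters in an essential way. Once the spread step is complete, $h(X)^{-}$ appears as a direct summand of a tensor construction built from $h(\mathcal{A})$ for every member of the family, and finite-dimensionality of $h(X)$ follows.
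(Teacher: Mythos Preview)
Your broad strategy---van Geemen--Izadi plus Voisin's spread---is exactly the paper's, but two of your steps do not work as written.

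\textbf{The relative target.} You posit a relative Kuga--Satake abelian scheme $\mathcal{A}\to B$ and a spread correspondence $\wt\Gamma$ on $\XX\times_B\mathcal{A}$. The paper does not do this, and for good reason: there is no obvious way to see that the van Geemen--Izadi correspondence globalises to such an $\mathcal{A}$. Instead the paper exploits the fact that the actual target of the van Geemen--Izadi map is $H^6(Z_b\times E)$, where $Z_b$ is the cubic fivefold $x_6^3+x_5^3+f_b=0$ and $E$ is the \emph{fixed} Fermat elliptic curve. These assemble into a manifest family $\Z\times E\to B$, and the correspondence itself is built from Shioda's rational map $\PP^5\times\PP^2\dashrightarrow\PP^6$ together with projectors for automorphisms of ambient projective space---operations that visibly restrict from relative correspondences. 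Only \emph{after} the Chow-level splitting is established does one pass to an abelian variety, fibre by fibre, via the intermediate Jacobian of $Z_b$. Relatedly, the left-inverse is not the transpose $\wt\Gamma^{\vee}$: one first uses semisimplicity of $\MM_{\hom}$ (Lefschetz holds for $X_b$, $Z_b$, $E$) to get a splitting $\Lambda_b$ on each fibre, then spreads the $\Lambda_b$ to a relative $\Lambda$ by a Hilbert-scheme argument \`a la Voisin.

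\textbf{The vanishing step.} The Fermat cubic plays no role as a ``reference point''; drop that idea. After a Leray spectral sequence correction, the defect $\DD=\Delta_{\XX,\mathrm{prim}}-\Lambda\circ\Gamma_{KS}$ becomes globally homologically trivial in $A^4(\XX'\times_{B'}\XX')$, and the decisive input is L.~Fu's result that for this particular family (base a linear subsystem of $\vert\OO_{\PP^5}(3)\vert$ describable via weighted projective space), one has
\[
\lim_{\substack{\longrightarrow\\ B'\subset B}} A^4_{\hom}(\XX'\times_{B'}\XX')=0.
\]
This is a stratification/projective-bundle computation over the closure $\bar B$, not a consequence of knowing finite-dimensionality at one fibre. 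It yields the Chow identity on fibres over some open $B''$, and a standard specialisation lemma extends it to every $b\in B$. One then concludes that $h(X_b)$ is a direct summand of $h(Z_b)\otimes h(E)(-1)\oplus\bigoplus\LL(m_i)$; finite-dimensionality follows because smooth cubic fivefolds satisfy $A^j_{AJ}(Z_b)=0$ for all $j$ (hence have finite-dimensional motive by Vial), and $E$ is a curve. Your $\mu_3$-splitting of $h(X)$ is a harmless variant of the paper's ``primitive diagonal'' $\Delta_X^-$, but it does not buy anything extra.
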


Unlike the Fermat cubic, the cubics as in theorem \ref{main} are {\em not\/} obviously dominated by a product of curves, so we need some more indirect reasoning. In a nutshell, the idea of the proof of theorem \ref{main} is as follows: thanks to the work of van Geemen--Izadi \cite{GI}, there exists a Kuga--Satake correspondence for these special cubic fourfolds. This implies that the homological motive of $X$ is a direct summand of the motive of an abelian variety. Then, considering the family of all cubic fourfolds as in theorem \ref{main} and using the machinery developed by Voisin \cite{V0}, \cite{V1} and L. Fu \cite{LFu}, we can upgrade this relation to rational equivalence and prove the Chow motive of $X$ is a direct summand of the motive of an abelian variety.

We present some consequences of finite--dimensionality. One consequence is the verification of (a weak form of) the Bloch conjecture for these special cubic fourfolds:

\begin{nonumberingc}[=corollary \ref{corbloch}] Let $X$ be a cubic fourfold as in theorem \ref{main}. Let $\Gamma\in A^4(X\times X)$ be a correspondence such that
  \[  \Gamma_\ast\colon\ \ H^{3,1}(X)\ \to\ H^{3,1}(X)\]
  is the identity. Then
  \[ \Gamma_\ast\colon\ \ A^3_{hom}(X)\ \to\ A^3_{hom}(X)\]
  is an isomorphism.
  \end{nonumberingc}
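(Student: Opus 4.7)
The plan is to deduce the corollary from Theorem \ref{main} by a standard ``generalized Bloch conjecture'' argument using finite-dimensional motives, in the spirit of Voisin and Vial.

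First, I would use the finite-dimensionality of $h(X)$ from Theorem \ref{main}, together with the fact (established in the proof) that $h(X)$ is a direct summand of the motive of an abelian variety, to obtain a refined Chow--K\"unneth decomposition
\[ h(X) \;=\; \mathbf{1} \oplus \LL \oplus h^4(X) \oplus \LL^3 \oplus \LL^4 , \]
together with a further splitting $h^4(X) = h^4_{alg}(X) \oplus t(X)$, in which the transcendental summand $t(X)$ has Betti realization equal to the orthogonal complement $H^4_{tr}(X,\QQ)$ of the algebraic classes inside $H^4(X,\QQ)$. Since $A^3(\LL^i)=0$ for $i \neq 3$ and $A^3(h^4_{alg}(X))=0$ (the algebraic part of $H^4$ being a sum of Tate twists $\LL^2$), one obtains a natural identification $A^3_{hom}(X)=A^3(t(X))$, and so it suffices to analyze the action of $\Gamma$ on the Chow motive $t(X)$.

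Next I would upgrade the cohomological hypothesis, showing that $\Gamma_\ast$ acts as the identity on all of $H^4_{tr}(X,\QQ)$, not merely on its $(3,1)$-piece. Complex conjugation extends the identity action from $H^{3,1}$ to $H^{1,3}$. For the residual transcendental $(2,2)$-part, one appeals to a theorem of Zarhin: the Tate twist $H^4_{tr}(X)(1)$ is a polarized $\QQ$-Hodge structure of K3 type with $h^{2,0}=1$, so (for each simple Hodge sub-factor carrying the $(2,0)$-line) its Hodge endomorphism algebra is a totally real or CM number field $E$, in which any element is determined by its image under the embedding corresponding to the $(2,0)$-line. Hence the endomorphism induced by $\Gamma$ is forced to be the identity on $H^4_{tr}(X)$. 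Alternatively, the algebraic Kuga--Satake correspondence of van Geemen--Izadi (the key ingredient of Theorem \ref{main}) transports this rigidity directly from the associated abelian variety.

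Finally, Kimura--O'Sullivan nilpotence delivers the conclusion: since $X$ has finite-dimensional motive and the self-correspondence $\Gamma-\Delta_X$ of $t(X)$ acts as zero on the Betti realization $H^4_{tr}(X)$, it is nilpotent in the ring of self-correspondences of $t(X)$, hence induces a nilpotent endomorphism $N$ on $A^3(t(X))=A^3_{hom}(X)$, so that $\Gamma_\ast=\mathrm{id}+N$ is an isomorphism as required. The main obstacle I anticipate is the upgrading step: when $H^4_{tr}(X)$ fails to be $\QQ$-irreducible (as may happen for special members of the family, such as the Fermat cubic), one must argue on each irreducible Hodge factor separately, and a hypothetical transcendental factor of pure type $(2,2)$ could not be handled by the Zarhin argument alone. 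It is precisely the algebraic rigidity supplied by the Kuga--Satake correspondence of Theorem \ref{main} that should allow this technical point to be bypassed.
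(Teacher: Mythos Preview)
Your proposal is correct and follows the same two-step strategy as the paper: upgrade the hypothesis from $H^{3,1}$ to all of $H^4_{tr}(X)$, then apply Kimura nilpotence to conclude that $\Gamma_\ast$ is unipotent on $A^3_{hom}(X)$.

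The paper's execution is lighter in two places. It does not construct $t(X)$ here (that motive appears only later, in the indecomposability section); instead it writes $\Gamma = \Delta_X + \gamma$ in cohomology with $\gamma$ supported on $(Y\times X)\cup (X\times Y)$ for some $Y\subset X$ of codimension $2$, observes that $\gamma$ acts trivially on $A^3_{hom}(X)=A^3_{AJ}(X)$ for dimension reasons, and applies nilpotence to $\Gamma-\Delta_X-\gamma$. More importantly, the upgrade step needs neither Zarhin's theorem nor the Kuga--Satake correspondence: the paper uses only the Hodge conjecture for $X$ (known for cubic fourfolds) together with non-degeneracy of the cup product. Concretely, the kernel of $\Gamma_\ast - \mathrm{id}$ on $H^4_{tr}(X)$ is a rational sub-Hodge structure containing $H^{3,1}\oplus H^{1,3}$; its orthogonal complement inside $H^4_{tr}(X)$ is then rational of pure type $(2,2)$, hence consists of algebraic classes, hence lies in $N^2H^4(X)\cap H^4_{tr}(X)=0$. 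This also dissolves your closing worry: a ``transcendental factor of pure type $(2,2)$'' cannot occur, since any such rational classes are algebraic by the Hodge conjecture and are therefore excluded from $H^4_{tr}$ by definition.
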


Another consequence (proposition \ref{corsmash}) concerns Voevodsky's smash--nilpotence conjecture for products $X_1\times X_2$, where $X_1, X_2$ are cubic fourfolds as in theorem \ref{main}.



\vskip0.8cm

\begin{convention} In this note, the word {\sl variety\/} will refer to a reduced irreducible scheme of finite type over $\C$. A {\sl subvariety\/} is a (possibly reducible) reduced subscheme which is equidimensional. 

{\bf All Chow groups are with rational coefficients}: we will denote by $A_jX$ the Chow group of $j$--dimensional cycles on $X$ with $\QQ$--coefficients; for $X$ smooth of dimension $n$ the notations $A_jX$ and $A^{n-j}X$ will be used interchangeably. 

The notations 
$A^j_{hom}(X)$ and $A^j_{AJ}(X)$ will be used to indicate the subgroups of 
homologically, resp. Abel--Jacobi trivial cycles.
For a morphism $f\colon X\to Y$, we will write $\Gamma_f\in A_\ast(X\times Y)$ for the graph of $f$.
The category of Chow motives (i.e., pure motives with respect to rational equivalence as in \cite{Sc}, \cite{MNP}) will be denoted $\MM_{\rm rat}$.

To avoid heavy notation, if $\tau\colon Y\to X$ is a closed immersion and $a\in A_i(Y)$, we will frequently write $a\in A_i(X)$ to indicate the proper push--forward $\tau_\ast(a)$. Likewise, for any inclusion $Y\subset X$ and $b\in A^j(X)$ we will often write
  \[   b\vert_{Y}\ \ \in A^j(Y)\]
  to indicate the cycle class $\tau^\ast(b)$.


We will write $H^j(X)$ 
and $H_j(X)$ 
to indicate singular cohomology $H^j(X,\QQ)$,
resp. singular homology $H_j(X,\QQ)$.

\end{convention}

\vskip0.8cm

\section{Preliminaries}

\subsection{Finite--dimensional motives}

We refer to \cite{K}, \cite{An}, \cite{Iv}, \cite{J4}, \cite{MNP} for the definition of finite--dimensional motive. 
An essential property of varieties with finite--dimensional motive is embodied by the nilpotence theorem:

\begin{theorem}[Kimura \cite{K}]\label{nilp} Let $X$ be a smooth projective variety of dimension $n$ with finite--dimensional motive. Let $\Gamma\in A^n(X\times X)_{\QQ}$ be a correspondence which is numerically trivial. Then there is $N\in\NN$ such that
     \[ \Gamma^{\circ N}=0\ \ \ \ \in A^n(X\times X)_{}\ .\]
\end{theorem}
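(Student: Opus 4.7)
The plan is to exploit Kimura's structural decomposition of finite--dimensional motives and apply a Cayley--Hamilton type argument inside the rigid pseudo-abelian tensor category $\MM_{\rm rat}$.

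First, by the definition of finite--dimensionality, the motive $h(X)$ splits as $M^+\oplus M^-$, where $M^+$ is evenly finite--dimensional (i.e.\ $\wedge^{a+1}M^+=0$ for some $a$) and $M^-$ is oddly finite--dimensional (i.e.\ $\mathrm{Sym}^{b+1}M^-=0$ for some $b$). Writing the idempotents projecting onto each summand, the endomorphism $\Gamma$ decomposes as a $2\times 2$ matrix of correspondences. I would argue that it suffices to establish nilpotence for the two diagonal blocks $\Gamma^+\in\mathrm{End}(M^+)$ and $\Gamma^-\in\mathrm{End}(M^-)$ separately: nilpotence on each piece forces some power of $\Gamma$ (computed by expanding the matrix product) to vanish, because any long enough word in the four blocks contains a long diagonal subword.

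Second, for $f\in\mathrm{End}(M^+)$, I would derive a Cayley--Hamilton identity of the shape
\[
\sum_{k=0}^{a}(-1)^{k}\, s_k(f)\cdot f^{a-k}\;=\;0\quad\text{in }\mathrm{End}(M^+),
\]
where $s_k(f)=\mathrm{tr}(\wedge^k f)\in\mathrm{End}(\mathbf{1})=\QQ$ is the $k$-th elementary symmetric function of the ``eigenvalues'' of $f$. Formally this follows from the vanishing $\wedge^{a+1}M^+=0$ together with the standard identity in a rigid symmetric monoidal $\QQ$-linear category relating $\wedge^{a+1}$ of $(t\,\mathrm{id}+f)$ to its Newton/elementary symmetric expansion. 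The analogous identity for $\mathrm{Sym}^{b+1}M^-=0$ gives the Cayley--Hamilton relation for the odd part, with a sign change reflecting the super-structure.

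Third, I would translate numerical triviality of $\Gamma$ into the vanishing of every $s_k(\Gamma^\pm)$ for $k\geq 1$. The point is that $s_k(\Gamma^\pm)$ is computable as an intersection number on the self-product $X^k\times X^k$ involving the $k$-fold external product $\Gamma^{\boxtimes k}$ (suitably symmetrized/antisymmetrized) intersected with a permutation of the diagonal; since $\Gamma$ is numerically trivial on $X\times X$, these higher products are numerically trivial on $X^k\times X^k$, and in particular their degrees vanish. Plugging $s_k=0$ for $k\geq 1$ into the Cayley--Hamilton relation yields $(\Gamma^+)^{\circ a}=0$ and $(\Gamma^-)^{\circ b}=0$, so $\Gamma^{\circ N}=0$ for $N\gg 0$.

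The main obstacle is the Cayley--Hamilton step: the category $\MM_{\rm rat}$ is not abelian, so one cannot manipulate ``characteristic polynomials'' in the naive linear-algebra sense. The argument must be carried out in the language of Schur functors on a rigid $\QQ$-linear tensor category, using the categorical trace arising from rigidity. The second subtle point is bookkeeping: one must verify that all the auxiliary correspondences appearing as $s_k$, computed on higher products $X^k$, really are controlled by numerical triviality of the original $\Gamma$ on $X\times X$, which is why the proof is sensitive to the precise definition of numerical equivalence with $\QQ$-coefficients.
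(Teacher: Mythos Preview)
The paper does not supply a proof of this statement; it is quoted as a known result of Kimura. Your outline is close to Kimura's own argument, and the Cayley--Hamilton step is essentially correct: in a $\QQ$-linear rigid tensor category the vanishing $\wedge^{a+1}M^+=0$ yields an identity $\sum_{k=0}^{a}(-1)^k s_k(f)\,f^{a-k}=0$ for every $f\in\mathrm{End}(M^+)$, and numerical triviality of $f$ forces $\mathrm{tr}(f^{\circ j})=0$ for all $j\ge1$ (since the trace pairing $(f,g)\mapsto\mathrm{tr}(f\circ g)$ factors through numerical equivalence by the very definition of the latter), hence $s_k(f)=0$ for $k\ge1$ via Newton's identities, hence $f^{\circ a}=0$. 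The odd case is analogous.

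The genuine gap is in your reduction from $h(X)=M^+\oplus M^-$ to the diagonal blocks. The claim that ``any long enough word in the four blocks contains a long diagonal subword'' is false: writing $\Gamma=\bigl(\begin{smallmatrix}a&b\\ c&d\end{smallmatrix}\bigr)$, the word $bcbc\cdots bc$ contains no diagonal letter at all, and in ordinary linear algebra the permutation matrix $\bigl(\begin{smallmatrix}0&1\\ 1&0\end{smallmatrix}\bigr)$ already shows that nilpotent diagonal blocks do not force nilpotence of the whole matrix. What the argument actually needs is that the \emph{ideal} of numerically trivial endomorphisms of $M^+$ (resp.\ $M^-$) is nilpotent, so that any product $f_1\circ\cdots\circ f_P$ of numerically trivial $f_i\in\mathrm{End}(M^+)$ vanishes once $P$ is large; one then parses a long word into successive ``returns to $M^+$'' (each of length $\le q+1$, since $d^{\circ q}=0$) and concludes. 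Your Cayley--Hamilton step only gives $f^{\circ a}=0$ for each individual $f$. Upgrading this to nilpotence of the ideal requires an extra ingredient: either the Nagata--Higman theorem (a non-unital $\QQ$-algebra in which $x^a=0$ for all $x$ is nilpotent), or the multilinear strengthening that Kimura establishes directly in his paper.
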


 Actually, the nilpotence property (for all powers of $X$) could serve as an alternative definition of finite--dimensional motive, as shown by a result of Jannsen \cite[Corollary 3.9]{J4}.
Conjecturally, any variety has finite--dimensional motive \cite{K}. We are still far from knowing this, but at least there are quite a few non--trivial examples:
 
\begin{remark} 
The following varieties have finite--dimensional motive: abelian varieties, varieties dominated by products of curves \cite{K}, $K3$ surfaces with Picard number $19$ or $20$ \cite{P}, surfaces not of general type with vanishing geometric genus \cite[Theorem 2.11]{GP}, Godeaux surfaces \cite{GP}, Catanese and Barlow surfaces \cite{V8}, certain surfaces of general type with $p_g=0$ \cite{PW}, Hilbert schemes of surfaces known to have finite--dimensional motive \cite{CM}, generalized Kummer varieties \cite[Remark 2.9(\rom2)]{Xu},
 3--folds with nef tangent bundle \cite{Iy} (an alternative proof is given in \cite[Example 3.16]{V3}), 4--folds with nef tangent bundle \cite{Iy2}, log--homogeneous varieties in the sense of \cite{Br} (this follows from \cite[Theorem 4.4]{Iy2}), certain 3--folds of general type \cite[Section 8]{V5}, varieties of dimension $\le 3$ rationally dominated by products of curves \cite[Example 3.15]{V3}, varieties $X$ with $A^i_{AJ}(X)_{}=0$ for all $i$ \cite[Theorem 4]{V2}, products of varieties with finite--dimensional motive \cite{K}.
\end{remark}

\begin{remark}
It is worth pointing out that all examples of finite-dimensional motives known so far happen to be in the tensor subcategory generated by Chow motives of curves (i.e., they are ``motives of abelian type'' in the sense of \cite{V3}). 
That is, the finite--dimensionality conjecture is still unknown for any motive {\em not\/} generated by curves (on the other hand, there exist many such motives, cf. \cite[7.6]{D}).
\end{remark}

\subsection{Kuga--Satake}
\label{ssKS}

This subsection presents the first main ingredient of this note: the van Geemen--Izadi construction of an algebraic Kuga--Satake correspondence for the cubic fourfolds under consideration.

\begin{theorem}[van Geemen--Izadi \cite{GI}]\label{thGI} Let $X\subset\PP^5$ be a smooth cubic fourfold, defined by an equation
  \[  x_5^3+f(x_0,\ldots,x_4)=0\ ,\]
  where $f(x_0,\ldots,x_4)$ defines a smooth cubic threefold. Let $Z\subset\PP^6$ be the cubic fivefold defined by 
    \[x_6^3+x_5^3+f(x_0,\ldots,x_4)=0\ .\]
There exist an elliptic curve $E$ and a correspondence $\Gamma\in A^5(X\times Z\times E)$ such that
  \[ \Gamma_\ast\colon\ \ H^4(X)_{prim}\ \to\ H^6(Z\times E)\]
  is injective.
 \end{theorem}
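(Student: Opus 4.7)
My plan is to use the Shioda--Katsura inductive construction for hypersurfaces of ``(something) plus a cube'' shape. Observe that $Z$ is obtained from $X$ by adjoining a single new cube $x_6^3$, precisely the setting in which the cohomology of $Z$ can be related to that of $X$ times a Fermat elliptic curve. Accordingly, I would take $E\subset\PP^2$ to be the Fermat elliptic curve $y_0^3+y_1^3+y_2^3=0$, equipped with its natural $\mu_3$-action.

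The explicit construction of $\Gamma$ rests on the factorization
\[ a^3+b^3 \;=\; (a+b)(a+\zeta_3 b)(a+\zeta_3^{\,2} b), \qquad \zeta_3=e^{2\pi\im/3}. \]
Using this identity I would write down a rational map $\phi\colon X\times E \dashrightarrow Z$ (defined over $\QQ(\zeta_3)$, then $\mu_3$-averaged to descend to $\QQ$) whose effect is to rewrite a point of $Z$ as a suitable ``product'' of a point of $X$ with a point of $E$: concretely, the factorization of $x_6^3$ is distributed over the $\mu_3$-orbit on the coordinates of $E$. The cycle $\Gamma$ is then the closure of the graph of $\phi$, regarded as a correspondence from $X$ to $Z\times E$. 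A dimension count ($\dim(X\times E)=5$) confirms that $\Gamma$ has codimension $5$ in the $10$-dimensional product $X\times Z\times E$, so $\Gamma\in A^5(X\times Z\times E)$ as required.

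For the cohomological injectivity, I would exploit the abundance of $\mu_3$-symmetries. The cubic fourfold $X$ carries a $\mu_3$-action via $x_5\mapsto\zeta_3 x_5$, while $Z$ carries a $\mu_3\times\mu_3$-action on the pair $(x_5,x_6)$ and $E$ carries its own $\mu_3$-action. The primitive cohomology $H^4(X)_{prim}$ splits into isotypic components under these characters, and $\Gamma_\ast$ intertwines the decompositions in a predictable way. A Jacobi-sum calculation, classical in the Shioda--Katsura setup, identifies each non-trivial isotypic component of $H^4(X)_{prim}$ with a non-zero subspace of some $H^5(Z)_\chi\otimes H^1(E)\subset H^6(Z\times E)$, which yields the required injectivity. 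The main obstacle is precisely this last step: one must ensure that no eigencomponent of $H^4(X)_{prim}$ is annihilated by $\Gamma_\ast$, which reduces to a non-vanishing statement for the relevant Jacobi sums, and this is the one place where the cube-plus-form of the defining equation $x_5^3+f=0$ is essentially used.
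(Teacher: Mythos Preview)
Your identification of $E$ as the Fermat cubic curve and of Shioda's rational map $\phi\colon X\times E\dashrightarrow Z$ as an ingredient is correct; both appear in the van Geemen--Izadi construction as the paper unravels it. The gap is in your argument for injectivity.

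You propose to decompose $H^4(X)_{prim}$ into isotypic pieces for the available $\ZZ/3\ZZ$-actions and then invoke a ``Jacobi-sum calculation, classical in the Shioda--Katsura setup''. That calculation is classical only for \emph{Fermat} hypersurfaces, where a large torus acts and splits the primitive cohomology into one-dimensional character spaces; Jacobi sums then control each line. Here $f$ is an \emph{arbitrary} smooth cubic in $x_0,\ldots,x_4$, so the only symmetry on $X$ is the single $\ZZ/3\ZZ$ scaling $x_5$. The invariant part of $H^4(X)_{prim}$ under this action vanishes (cf.\ \cite{BCS}), so the isotypic decomposition consists of exactly two pieces, each $11$-dimensional. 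A character-by-character argument cannot establish injectivity on $11$-dimensional blocks; there is no ``non-vanishing of the relevant Jacobi sums'' statement to appeal to, because there are no Jacobi sums in this generality.

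What actually does the work, both in \cite{GI} and in the paper's detailed unraveling (proof of Theorem~\ref{GIfam}), is van Geemen's theory of \emph{half twists} \cite{vG2}. Since $K=\QQ(\zeta)$ acts on $V:=H^4(X)_{prim}$ and this Hodge structure is of $K3$ type, the positive half twist $V_{1/2}$ (a weight-one Hodge structure) exists, and one obtains a correspondence-induced injection $V\hookrightarrow W\otimes H^1(E)$ with $W=V_{1/2}(-1)$, realised on $X\times X\times E\times E$. Only \emph{after} this step is Shioda's map applied, in the form $Sh\otimes\hbox{id}\colon W\otimes H^1(E)\to H^5(Z)\otimes H^1(E)$. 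Thus the correspondence $\Gamma$ of the theorem is not simply the closure of the graph of $\phi$ but a composition of three correspondences (an embedding of $V$ into $V\otimes H^1(E)\otimes H^1(E)$, a projector onto a $\beta$-invariant subspace, and $Sh\otimes\hbox{id}$). Your proposal collapses this to one step and omits the half-twist input that furnishes injectivity; without it, the argument does not go through.
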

 
 \begin{proof} This is \cite[Corollary 5.3]{GI}. This result is based on the facts that (1) the Hodge structure of any smooth cubic fourfold is of $K3$ type (i.e., $H^{4,0}(X)=0$ and $\dim H^{3,1}(X)=1$), and (2) for cubics as in theorem \ref{thGI}, the cyclotomic field $\QQ(\zeta)$ acts on $H^4(X)_{prim}$ (where $\zeta=e^{2\pi i\over 3}$), and so the theory of half twists \cite{vG2} applies. 
 
 We note that \cite[Corollary 5.3]{GI} actually shows more precisely that
   \[  \Gamma_\ast\colon\ \ H^4(X)_{prim}\ \to\ \ima\Bigl( H^5(Z)\otimes H^1(E)\ \to\  H^6(Z\times E)\Bigr)\]
  is injective. Also, as we shall see below (in the proof of theorem \ref{GIfam}), the elliptic curve $E$ is actually a plane cubic of Fermat type $x_0^3+x_1^2+x_2^3=0$.
  \end{proof}

 \begin{corollary}\label{abvar} Let $X$ be as in theorem \ref{thGI}. There exist an abelian variety $A$ (of dimension $22 $) and a correspondence $\Psi\in A^{3}(X\times A)$ such that
   \[ \Psi_\ast\colon\ \ H^4(X)_{prim}\ \to\ H^2(A)\]
  is injective.
  \end{corollary}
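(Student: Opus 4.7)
The plan is to derive the corollary from Theorem \ref{thGI} by replacing the target $Z\times E$ by an abelian variety of dimension $22$.

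The starting observation is that the primitive middle cohomology of the cubic fivefold $Z\subset\PP^6$ is a Hodge structure of level one: the only non--zero primitive Hodge numbers of $Z$ in degree $5$ are $h^{3,2}_{prim}=h^{2,3}_{prim}=21$, and the restriction $H^5(\PP^6)\to H^5(Z)$ being zero, one has $H^5(Z)=H^5(Z)_{prim}$. Consequently the Griffiths intermediate Jacobian $J:=J^3(Z)$ is an abelian variety of dimension $21$, carrying a canonical Hodge isomorphism $H^1(J)\cong H^5(Z)$. Setting $A:=J\times E$, an abelian variety of dimension $22$, the K\"unneth formula yields a Hodge embedding $H^1(J)\otimes H^1(E)\hookrightarrow H^2(A)$ into which I would factor $\Gamma_\ast$.

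The crux of the argument is to promote the above Hodge isomorphism to the action of an algebraic correspondence $\Lambda\in A^3(Z\times J)$ (the codimension is dictated by the shift $\Lambda_\ast\colon H^j(Z)\to H^{j+2\cdot 3-2\cdot 5}(J)$). Granted such a $\Lambda$, I would define
\[
\Psi\ :=\ (\Lambda\boxtimes\Delta_E)\circ\Gamma\ \in\ A^3(X\times A),
\]
where $\Delta_E\in A^1(E\times E)$ is the diagonal; the codimension of $\Psi$ is then confirmed by the composition rule $5+(3+1)-6=3$. Using the refinement in the proof of Theorem \ref{thGI} that the image of $\Gamma_\ast$ already lies inside $H^5(Z)\otimes H^1(E)$, one obtains the factorisation
\[
H^4(X)_{prim}\ \xrightarrow{\Gamma_\ast}\ H^5(Z)\otimes H^1(E)\ \xrightarrow{\Lambda_\ast\otimes\mathrm{id}}\ H^1(J)\otimes H^1(E)\ \hookrightarrow\ H^2(A),
\]
in which both arrows are injective, so $\Psi_\ast$ is injective as required.

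The hard part is the algebraic realisation of the Abel--Jacobi isomorphism $H^5(Z)\cong H^1(J^3(Z))$ by a cycle $\Lambda$. The Hodge isomorphism itself is tautological from the level--one property, but writing it as the action of an algebraic cycle requires extra geometric input; for smooth cubic fivefolds this can be furnished by the Fano variety $F_1(Z)$ of lines and the (algebraic) Abel--Jacobi morphism $F_1(Z)\to J^3(Z)$, combined with the universal line $I\subset F_1(Z)\times Z$ as an incidence correspondence. Once $\Lambda$ is in place, the remainder is purely formal bookkeeping with correspondences.
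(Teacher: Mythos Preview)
Your overall strategy coincides with the paper's: reduce to finding an algebraic correspondence $\Lambda$ realising the Hodge isomorphism $H^5(Z)\cong H^1(J^3(Z))$, set $A=J^3(Z)\times E$, and compose $\Lambda\times\Delta_E$ with the van Geemen--Izadi correspondence $\Gamma$. The codimension bookkeeping and the factorisation through $H^5(Z)\otimes H^1(E)$ are correct and match the paper exactly.

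There is, however, a concrete slip in the geometric input you propose for $\Lambda$. A line in the fivefold $Z$ has codimension $4$, so the Abel--Jacobi map for lines lands in $J^4(Z)$, which is trivial since $H^7(Z)=0$; there is no natural Abel--Jacobi morphism $F_1(Z)\to J^3(Z)$ coming from the universal line. The paper instead invokes the general result of Achter--Casalaina-Martin--Vial \cite{ACV} (which produces the required correspondence from the coniveau condition $H^5(Z)=N^2H^5(Z)$, itself a consequence of $A_0(Z)=A_1(Z)=\QQ$), and notes as an alternative Collino's theorem \cite{Col} that the Albanese of the Fano \emph{surface of planes} in $Z$ is isomorphic to $J^3(Z)$ via the Abel--Jacobi map. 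Planes have codimension $3$, so their Abel--Jacobi image does sit in $J^3(Z)$. Replacing ``lines'' by ``planes'' (and $F_1$ by $F_2$) in your last paragraph fixes the argument; after that correction your proof is essentially the paper's.
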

  
  \begin{proof} Any smooth cubic fivefold $Z$ has $H^5(Z)=N^2 H^5(Z)$, where $N^\ast$ denotes the geometric coniveau filtration (this follows from the fact that any cubic fivefold $Z$ has $A_0(Z)=A_1(Z)=\QQ$, which is proven in \cite{Lew} or, alternatively, \cite{Ot} or \cite{HI}).

  Now, \cite[Theorem 1]{ACV} furnishes an abelian variety $J$ (of dimension $h^{2,3}(Z)=21$ ) and a correspondence $\Lambda^\prime$ on $J\times Z$ that induces an isomorphism
  \[  (\Lambda^\prime)_\ast\colon\ \ H^1(J)\ \xrightarrow{\cong}\ H^5(Z)\ .\]
  (As noted by the referee, one may avoid recourse to \cite{ACV} here by using the fact that thanks to Collino \cite{Col}, the Abel--Jacobi map induces an isomorphism from the Albanese of the Fano surface of planes in $Z$ to the intermediate Jacobian of $Z$.)
  
  The correspondence $\Lambda^\prime$ induces an isomorphism 
  \[  \Lambda^\prime\colon\ \ h^1(J)\ \xrightarrow{\cong}\ h^5(Z)\ \ \hbox{in}\ \MM_{\rm hom}\ ,\]
  hence there also exists a correspondence $\Lambda$ on $Z\times J$ inducing the inverse isomorphism
  \[ \Lambda \colon\ \ h^5(Z)\ \xrightarrow{\cong}\ h^1(J)\ \ \hbox{in}\ \MM_{\rm hom}\ .\]
  
  The composition
  \[ H^4(X)_{prim}\ \xrightarrow{\Gamma_\ast}\ H^5(Z)\otimes H^1(E)\ \xrightarrow{(\Lambda\times \Delta_E)_\ast}\ H^1(J)\otimes H^1(E)\ \ \subset H^2(J\times E)\]
  has the required properties.
  \end{proof}

\begin{notation}\label{family} Let 
  \[ \XX\ \to\ B \]
  denote the universal family of all smooth cubic fourfolds of type
   \[  x_5^3+f_b(x_0,\ldots,x_4)=0\ ,\]
  where $f_b(x_0,\ldots,x_4)$ defines a smooth cubic threefold. 
  (That is, the parameter space $B$ is a Zariski open in a linear subspace $\bar{B}$ of the complete linear system $\PP H^0(\PP^5,\OO_{\PP^5}(3))$.)
  
  Likewise, let
  \[ \Z\ \to\ B \]
  denote the family of smooth cubic fivefolds of type
  \[ x_6^3+x_5^3+f_b(x_0,\ldots,x_4)=0\ .\]
  
  For $b\in B$, we will write $X_b\subset\PP^5$ and $Z_b\subset\PP^6$ to denote the fibre of $\XX\to B$ (resp. $\Z\to B$) over $b$.
  \end{notation}

  \begin{notation} Let 
    \[ \XX\ \to\ B\ ,\ \ \ \YY\ \to\ B \]
    be two smooth families (i.e., smooth projective morphisms between smooth quasi--projective varieties). A relative correspondence from $\XX$ to $\YY$ is by definition a cycle class in
    \[ A^\ast(\XX\times_B \YY) \ .\]
    As explained in \cite[Section 8.1]{MNP}, using Fulton's refined Gysin homomorphisms \cite{F} one can define the composition of relative correspondences. For a relative correspondence $\Gamma\in A^i(\XX\times_B \YY)$, and a point $b\in B$ the ``restriction to a fibre'' is defined as
    \[  \Gamma\vert_{X_b\times Y_b} := \iota^\ast (\Gamma)\ \ \ \in A^i(X_b\times Y_b)\ ,\]
    where $\iota^\ast$ denotes the refined Gysin homomorphism associated to the lci morphism $\iota\colon b\to B$.
  \end{notation}

  A crucial point in this note is that the Kuga--Satake construction of \cite{GI} can be done family--wise:
  
  \begin{theorem}\label{GIfam} Notation as in \ref{family}. There exists a relative correspondence
     \[ \Gamma_{KS}\ \ \in A^5\bigl( \XX\times_B (\Z\times E)\bigr)\ ,\]
    such that for any $b\in B$, the restriction 
      \[ \Gamma_{KS,b}:=\Gamma_{KS}\vert_{X_b\times Z_b\times E}\ \ \in A^5\bigl(X_b\times (Z_b\times E)\bigr)\]
      has the property that
      \[ (\Gamma_{KS,b})_\ast\colon\ \ H^4(X_b)_{prim}\ \to\ H^6(Z_b\times E)\]
      is injective.
    \end{theorem}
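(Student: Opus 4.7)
The strategy is to revisit the proof of \cite[Corollary 5.3]{GI} and observe that every geometric ingredient entering the van Geemen--Izadi construction already lives in family form over $B$, so that the fiberwise correspondences $\Gamma_b$ assemble into a single relative cycle in $A^5(\XX\times_B \Z\times E)$.

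First I would identify the basic building blocks. These are: (i) the relative closed immersion $\XX\hookrightarrow \Z$ cut out by $x_6=0$, specializing fiberwise to $X_b\hookrightarrow Z_b$; (ii) the order-three automorphisms $\sigma_X$ on $\XX$ and $\sigma_Z$ on $\Z$ given by multiplication of $x_5$, $x_6$ by $\zeta=e^{2\pi i/3}$, both commuting with the projection to $B$; (iii) the fixed Fermat elliptic curve $E:x_0^3+x_1^3+x_2^3=0$, intrinsic and independent of $b$, equipped with its standard $\mu_3$-automorphism $\sigma_E$; and (iv) the graphs of these automorphisms along with the diagonals, out of which one builds the idempotents that pick out the relevant $\mu_3$-eigenspaces in $H^4(X_b)_{prim}$, $H^5(Z_b)$, and $H^1(E)$. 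The crucial observation is that $E$ does not vary with $b$; it is the universal carrier of the $\QQ(\zeta)$-action that drives the half-twist construction of \cite{vG2} on the common $\QQ(\zeta)$-module $H^4(X_b)_{prim}$.

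Next I would assemble $\Gamma_{KS}$ on the total space by transcribing the recipe of \cite{GI} one layer up: take the graph of $\XX\hookrightarrow \Z$, multiply externally by a fixed cycle on $E$ built from $\sigma_E$ and $\Delta_E$, and compose with the eigenspace idempotents constructed from $\sigma_X$, $\sigma_Z$, $\sigma_E$. Since the refined Gysin pullback $\iota^\ast$ associated to $\iota\colon b\to B$ commutes with composition and external product of relative correspondences \cite[Section 8.1]{MNP}, the fiberwise restriction $\Gamma_{KS,b}$ equals the cycle $\Gamma_b$ produced in \cite{GI}, and the injectivity of $(\Gamma_{KS,b})_\ast$ on $H^4(X_b)_{prim}$ is then a direct invocation of theorem \ref{thGI}.

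The main obstacle is not conceptual but organizational: one must reread \cite[Section 5]{GI} and verify that their construction really uses only operations defined over the whole base $B$, with no auxiliary choice depending on the specific fiber. Once this bookkeeping is done, no further geometric input is needed and $\Gamma_{KS}$ exists essentially by construction.
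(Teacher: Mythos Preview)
Your high--level strategy coincides with the paper's: one simply rereads \cite{GI} and checks that each correspondence used there is the restriction of a relative correspondence over $B$. However, the specific geometric ingredients you list are not those of \cite{GI}, and the one you single out as primary would in fact fail.

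The essential link between $X_b$ and $Z_b$ in the van Geemen--Izadi construction is \emph{not} the closed immersion $X_b\hookrightarrow Z_b$ cut out by $x_6=0$. That immersion induces $i_\ast\colon H^4(X_b)\to H^6(Z_b)\cong\QQ$, which annihilates all of $H^4(X_b)_{prim}$; no amount of composing with $\mu_3$--eigenprojectors or tensoring with a fixed class on $E$ will rescue injectivity from a zero map. The actual bridge is Shioda's rational map
\[
  X_b\times E\ \dashrightarrow\ Z_b
\]
(coming from a rational map $\PP^5\times\PP^2\dashrightarrow\PP^6$ as in \cite{Shi}), whose graph closure induces
\[
  Sh\colon\ H^4(X_b)\otimes H^1(E)\ \to\ H^5(Z_b)\ .
\]
This is the step that lands in the interesting cohomology $H^5(Z_b)$ rather than in $H^6(Z_b)\cong\QQ$. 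The paper's proof goes through an intermediate stage involving \emph{two} copies of $E$: first a correspondence $\Gamma$ (restriction of $\Delta_\XX\times D$ for a fixed $D\in A^1(E\times E)$) sends $V=H^4(X_b)_{prim}$ into $V\otimes H^1(E)\otimes H^1(E)$; then a projector $\mu_f\otimes\hbox{id}$ (built from the automorphism $\beta$ of $X_b\times E$, which extends to $\PP^5\times E$) cuts down to the half--twist $W\otimes H^1(E)$; finally $Sh\otimes\hbox{id}$ embeds this into $H^5(Z_b)\otimes H^1(E)$. One copy of $E$ is absorbed by Shioda's map, the other survives.

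So your outline needs two corrections: replace the closed immersion by the Shioda rational map (which is indeed defined relatively, coming from $\PP^5\times\PP^2\dashrightarrow\PP^6$), and account for the two--step passage through $E\times E$. Once these pieces are in place, your concluding paragraph is exactly right: each piece visibly exists over $B$, refined Gysin pullback is compatible with composition, and theorem \ref{thGI} gives the fiberwise injectivity.
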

    
    \begin{proof} To prove this, we partially unravel the proof of \cite[Theorem 5.2]{GI} and \cite[Corollary 5.3]{GI}. For a given $b\in B$, let us denote
      \[ V:=H^4(X_b)_{prim}(1)\ \ \]
      (where the Tate twist indicates $V$ is a weight $2$ Hodge structure with $V^{0,2}=1$). 
      The cubic $X_b$ is invariant under the $\ZZ/3\ZZ$ action on $\PP^5$ induced by
      \[  [x_0:\ldots:x_5]\ \mapsto\ [x_0:\ldots :x_4:\zeta x_5]\ ,\]
      where $\zeta=e^{2\pi i\over 3}$.  As such, we have that $V$ is a vector space over $K:=\QQ(\zeta)$. Let $E\subset\PP^2$ denote the degree $3$ Fermat curve. Then $E$ is an elliptic curve with complex multiplication by $K$ (here $K$ acts via mutiplication on the last coordinate), and
       \[ K_{-1/2}\cong H^1(E)\ .\]
    (NB: in the notation of \cite{GI}, the curve $E$ is both $Y_1$ and $A_K$.)   
       The positive half twist $V_{1/2}$ (a Hodge structure of weight $1$)  exists \cite[Example 2.12 and Proposition 2.8]{vG2}, \cite[Theorem 2.6]{GI}. Moreover, there is an equality of Hodge structures of weight $3$
       \[ V_{1/2}(-1)=W:=\Bigl( V\otimes H^1(E)\Bigr)^{<\beta>}\ ,\]
       where $()^{<\beta>}$ denotes the invariant part under a certain automorphism $\beta$ of $X_b\times E$
       \cite[Theorem 3.4 and Lemma 3.7]{GI}. The automorphism $\beta$ is defined as 
              \[  \beta:=((\alpha_4)^\ast,(\alpha_1)^\ast)\colon\ \ \ X_b\times E\ \to\ X_b\times E\ ,\]
          where $\alpha_4$ (resp. $\alpha_1$) is the restriction to $X_b$  (resp. to $E$) of the automorphism of $\PP^5$ given by
           \[    [x_0:\ldots:x_5]\ \mapsto\ [x_0:\ldots :x_4:\zeta x_5]\ \]
           (resp. of the automorphism of $\PP^2$ defined as $[x_0:x_1:x_2]\mapsto [x_0:x_1:\zeta x_2]$).
                      
                   There is a homomorphism  
     \[ \mu_f\colon\ \ V\otimes H^1(E)\ \to\ W\ \ \ \subset H^4(X_b)\otimes H^1(E)\ ,\]
     defined as the projection onto the $\beta$--invariant subspace. The homomorphism $\mu_f$ is induced by a correspondence; what's more, this correspondence comes from a relative correspondence (this is because the automorphism $\beta=(\alpha_4,\alpha_1)$ in \cite[Theorem 3.4]{GI} comes from an automorphism of $\PP^5\times E$, and so for each $X_b$ the homomorphism $\mu_f$ is given by the restriction of a correspondence on $\PP^5\times E\times\PP^5\times E\times B$).
     
     Next, one considers the homomorphism
     \[ \mu_f\otimes\hbox{id}\colon\ \ V\otimes H^1(E)\otimes H^1(E)\ \to\ W\otimes H^1(E)\ \ \ \subset H^4(X_b)\otimes H^1(E)\otimes H^1(E)\ ;\]
     this has the property that
       \[ \ima (\mu_f\otimes\hbox{id})=V_{1/2}(-1)\otimes K_{-1/2}=W\otimes H^1(E)\ .\]
     The domain of $\mu_f\otimes\hbox{id}$ has a certain Hodge substructure $S$ defined as
       \[ S:=\Bigl\{    w\in V\otimes K_{-1/2}\otimes K_{-1/2}\ \vert\ \ ((\alpha_4)^\ast\otimes \zeta\otimes 1)w=w\ ,\ \ (1\otimes \zeta\otimes\zeta)w=w\Bigr\}\ .\]
     One checks that
       \[ S\cong V(-1)\ .\]
       Since $S\subset V_{1/2}(-1)\otimes K_{-1/2}$, the restriction of $\mu_f\otimes\hbox{id}$ to $S$ is injective, and thus
         \[ (\mu_f\otimes\hbox{id})(S)\cong V(-1)\ .\]
    One checks that actually
      \[ S\ \subset\ V\otimes K(-1)\ \subset\ V\otimes K_{-1/2}\otimes K_{-1/2}\ ,\]
      where $K(-1)$ is a trivial weight $2$ rank $2$ Hodge structure. It follows that the (twisted) isomorphism
      \[ \Gamma\colon\ \ V\ \to\ S\cong V(-1) \]
      is induced by a correspondence on $X_b\times X_b\times E\times E$. This correspondence is again the restriction of a relative correspondence (it comes from $\Delta_\XX\times 
      D$, where $D\in A^1(E\times E)$).
      
      Next, the work of Shioda \cite[Theorem 2]{Shi} produces a homomorphism
      \[  Sh\colon\ \ H^4(X_b)\otimes H^1(E)\ \to\ H^5(Z_b)\ .\]
      As $Sh$ comes from a rational map $X_b\times E \dashrightarrow Z_b$, it is induced by a correspondence (the closure of the graph). As this rational map comes from a rational map $\PP^5\times\PP^2\dashrightarrow \PP^6$, this correspondence is the restriction of a relative correspondence.
    
    Finally, one considers the composition
      \[ V\ \xrightarrow{\Gamma}\ V\otimes H^1(E)\otimes H^1(E)\ \xrightarrow{\mu_f\otimes{\rm id}}\ W\otimes H^1(E)\ \xrightarrow{Sh\otimes{\rm id}}\ H^5(Z_b)\otimes H^1(E)\ .\]
      This composition is injective, and it is induced by a correspondence which is the restriction to $X_b\times Z_b\times E$ of a relative correspondence.
        \end{proof}

 \subsection{Splitting}
 
 For the proof of the main result, it will be useful to have splittings of the injections of subsection \ref{ssKS}.  
 
 \begin{lemma}\label{split} Let 
   \[\Gamma_{KS}\ \ \in A^5\bigl( \XX\times_B (\Z\times E)\bigr)\] 
   be a relative Kuga--Satake correspondence as in 
  theorem \ref{GIfam}. For any $b\in B$ there exists a correspondence $\Lambda_b\in A^{5}(Z_b\times E\times X_b)$ such that
    \[  H^4(X_b)_{prim}\ \xrightarrow{(\Gamma_{KS,b})_\ast}\ H^6(Z_b\times E)\ \xrightarrow{(\Lambda_b)_\ast}\ H^4(X_b)_{prim} \]
    is the identity.
    \end{lemma}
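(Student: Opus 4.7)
The plan is to take $\Lambda_b$ as a suitable rational multiple of the transpose correspondence $(\Gamma_{KS,b})^t \in A^5(Z_b\times E\times X_b)$. The dimensions match, since $\dim(X_b\times Z_b\times E)=10$ implies that the transpose lives in the same codimension $5$, and so $((\Gamma_{KS,b})^t)_\ast$ acts as a map $H^6(Z_b\times E)\to H^4(X_b)$, of exactly the desired shape.

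The key would be to analyse the composition $\psi := ((\Gamma_{KS,b})^t)_\ast \circ (\Gamma_{KS,b})_\ast$ as an endomorphism of $V := H^4(X_b)_{prim}$. By the projection formula, $\psi$ is characterised by the ``adjoint'' identity $\langle \psi(v),v'\rangle_{X_b}=\langle (\Gamma_{KS,b})_\ast v, (\Gamma_{KS,b})_\ast v'\rangle_{Z_b\times E}$ for $v,v'\in V$, where the pairings are induced by cup product together with Poincar\'e duality. Injectivity of $(\Gamma_{KS,b})_\ast$ from theorem \ref{GIfam}, together with the Hodge--Riemann bilinear relations on the polarised Hodge structure $V$, would then force $\psi|_V$ to be a non--degenerate, hence invertible, Hodge endomorphism of $V$.

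To upgrade invertibility to scalarity (so that the inverse is automatically algebraic), I would invoke the explicit three--step factorisation $\Gamma_{KS,b} = (Sh\otimes\mathrm{id}) \circ (\mu_f\otimes\mathrm{id}) \circ \Gamma$ from the proof of theorem \ref{GIfam}, and analyse each factor against its transpose. The correspondence $\Gamma = \Delta_X\times D$ with $D\in A^1(E\times E)$ yields $\Gamma^t\circ\Gamma =(D\cdot D)\cdot \mathrm{id}_V$ by a direct push--pull computation on $X_b\times X_b\times E\times E$. The projector $\mu_f\otimes\mathrm{id}$ is symmetric (being a sum of graphs of the finite order--three group $\langle \beta\rangle$) and idempotent, so it acts as the identity on the image of $\Gamma$, which by construction lies inside the $\beta$--invariant subspace $S$. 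Finally, $Sh\otimes\mathrm{id}$ is induced by a finite rational cover $X_b\times E\dashrightarrow Z_b$ (tensored with the identity on $E$), and its transpose--composition acts on the image as multiplication by the covering degree. Combining the three contributions gives $\psi = c\cdot\mathrm{id}_V$ for some $c\in\QQ^\times$, and then $\Lambda_b := c^{-1}(\Gamma_{KS,b})^t$ does the job.

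The main obstacle is the verification that $c\neq 0$, i.e., the non--vanishing of each of the three intermediate contributions: the self--intersection $D\cdot D$ on $E\times E$, the Shioda covering degree, and the projector $\mu_f$ acting on the relevant subspace. The first of these reduces, after extracting the $(1,1)$--K\"unneth component $\delta\in H^1(E)\otimes H^1(E)$ of $[D]$, to the non--degeneracy of $\delta$ as a bilinear form on $H^1(E)^\vee$, which is forced by the van Geemen--Izadi construction. The second and third are immediate. Globally, $c\neq 0$ is guaranteed by the injectivity of $(\Gamma_{KS,b})_\ast$ established in theorem \ref{GIfam}, so the cleanest writeup might bypass the step--by--step computation entirely and just invoke injectivity plus Hodge--Riemann positivity to conclude.
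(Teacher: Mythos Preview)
Your approach differs from the paper's, and as written it has a genuine gap.

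The paper never tries to build $\Lambda_b$ explicitly. Instead it observes that $X_b$, $Z_b$, $E$ (being smooth hypersurfaces or curves) satisfy the Lefschetz standard conjecture, so homological and numerical equivalence coincide on all their products \cite{K0}, \cite{K}. By Jannsen's theorem \cite{J}, the homological motives $h^4(X_b)$ and $h^6(Z_b\times E)$ therefore sit in a semisimple abelian subcategory $\MM_{\rm hom}^\circ\subset\MM_{\rm hom}$, and any monomorphism in such a category splits. That splitting is $\Lambda_b$.

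Your scalarity argument breaks at the Shioda factor. The correspondence $Sh$ is the closure of the graph of the rational map $f\colon X_b\times E\dashrightarrow Z_b$, so $(Sh)_\ast$ acts as pushforward $f_\ast$ and $(Sh^t)_\ast$ as pullback $f^\ast$; hence $Sh^t\circ Sh$ acts as $f^\ast f_\ast$, which has no reason to be a scalar (it is $f_\ast f^\ast$ that equals multiplication by the degree, and that is the wrong order here). More structurally, the Hodge endomorphism algebra of $V=H^4(X_b)_{prim}$ already contains $K=\QQ(\zeta)$ via the automorphism $\alpha_4$, so there is no reason to expect $\psi$ to be a rational scalar at all. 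Your final ``bypass'' sentence does not close the gap either: Hodge--Riemann positivity (after a Lefschetz decomposition of $H^6(Z_b\times E)$, which you would still need to carry out to get non--degeneracy on the image) yields only that $\psi$ is an \emph{invertible} Hodge endomorphism of $V$, and you give no argument that $\psi^{-1}$ is induced by an algebraic correspondence. That missing step is exactly what the paper's semisimplicity argument supplies. If you want to salvage your line without invoking Jannsen, one route is Cayley--Hamilton: once $\psi\vert_V$ is known to be invertible, its inverse is a polynomial in $\psi\vert_V$ and is therefore induced by a polynomial in the algebraic correspondence $(\Gamma_{KS,b})^t\circ\Gamma_{KS,b}$, composed with the (algebraic) projector onto primitive cohomology.
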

    
  \begin{proof} The varieties $X_b, Z_b$ and $E$ verify the Lefschetz standard conjecture, and hence homological and numerical equivalence coincide for all powers and products of $X_b, Z_b, E$ \cite{K0}, \cite{K}. It follows that the homological motives
    \[ h^4(X_b)\ ,\ \ h^6(Z_b\times E)\ \ \in \MM_{\rm hom}\]
    are contained in a semisimple subcategory $\MM_{\rm hom}^{\circ}\subset\MM_{\rm hom}$ (one may define $\MM_{\rm hom}^{\circ}$ as the full additive subcategory generated by motives of varieties for which the Lefschetz standard conjecture is known; it follows from \cite{J} that $\MM_{\rm hom}^{\circ}$ is semisimple).
  
  Theorem \ref{thGI}, combined with semisimplicity, now implies that
    \[ \Gamma_{KS,b}\colon\ \ h^4(X_b)\ \to\ h^6(Z_b\times E)\ \ \hbox{in}\ \MM_{\rm hom}^\circ\]
    is a split injection, i.e. there exists a correspondence $\Lambda_b$ as in lemma \ref{split}.
   \end{proof}
   
  The splitting of lemma \ref{split} can be extended to the family, in the following sense:
  
  \begin{proposition}\label{splitfam}  Let 
    \[\Gamma_{KS}\ \ \in A^5\bigl( \XX\times_B (\Z\times E)\bigr)\] 
    be a relative Kuga--Satake correspondence as in 
  theorem \ref{GIfam}. There exists a relative correspondence 
    \[\Lambda\in A^4\bigl( (\Z\times E)\times_B   \XX\bigr)\ ,\] 
    such that for any $b\in B$ we have that
  \[  H^4(X_b)_{prim}\ \xrightarrow{(\Gamma_{KS,b})_\ast}\ H^6(Z_b\times E)\ \xrightarrow{(\Lambda\vert_b)_\ast}\ H^4(X_b)_{prim} \]
    is the identity, where $\Lambda\vert_b:=\Lambda\vert_{Z_b\times E\times X_b}\in A^4(Z_b\times E\times X_b)$.
   \end{proposition}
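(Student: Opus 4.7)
The plan is to produce $\Lambda$ by the standard ``generic point spread'' technique: first find a splitting over $\C(B)$, then spread it out to a Zariski open $B^\circ\subset B$ and close it up to the whole family, and finally verify that the fibrewise splitting property persists at every $b\in B$ via a local-system argument.

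First, I would apply the semisimplicity argument of Lemma \ref{split} at the geometric generic point $\bar\eta = \operatorname{Spec}\overline{\C(B)}$. The Lefschetz standard conjecture for $X_{\bar\eta}, Z_{\bar\eta}, E_{\bar\eta}$ holds (it passes to geometric generic fibres of any smooth complex family), so the category $\MM_{\rm hom}^\circ$ is still semisimple at $\bar\eta$ and furnishes a splitting $\Lambda_{\bar\eta}$ of $(\Gamma_{KS,\bar\eta})_\ast$ on primitive cohomology. Averaging over a finite Galois orbit---harmless because we use $\QQ$--coefficients---gives $\Lambda_\eta$ defined already over $\C(B)$.

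Second, any cycle on the generic fibre of a scheme smooth over $B$ is the restriction of a cycle defined over some non-empty Zariski open $B^\circ\subset B$. Thus $\Lambda_\eta$ extends to a relative correspondence $\Lambda^\circ$ on $(\Z\times E)_{B^\circ}\times_{B^\circ}\XX_{B^\circ}$, and its Zariski closure inside $(\Z\times E)\times_B\XX$ defines the candidate $\Lambda$. By construction $\Lambda|_b = \Lambda^\circ|_b$ for every $b\in B^\circ$, and for all such $b$ the composition $(\Lambda|_b\circ\Gamma_{KS,b})_\ast$ is the identity on $H^4(X_b)_{prim}$.

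Third---and this is the crux---I need to verify that the identity persists for every $b\in B\setminus B^\circ$. Since every fibre $X_b$ is smooth, the morphism $\pi\colon\XX\times_B\XX\to B$ is smooth projective, so $R^q\pi_\ast\QQ$ are honest local systems and the Leray spectral sequence degenerates. The cycle class of the relative correspondence $\Lambda\circ\Gamma_{KS}$ therefore gives a global section of the appropriate $R^q\pi_\ast\QQ$ whose fibre at $b$ is exactly the class $[\Lambda|_b\circ\Gamma_{KS,b}]$. Via K\"unneth and Poincar\'e duality this section projects onto a flat section of the endomorphism local system of the primitive sub-local-system of $R^4(\pi_\XX)_\ast\QQ$. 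By construction this flat section agrees with the identity endomorphism over $B^\circ$; the identity endomorphism is itself a flat section, and two flat sections of a local system over a connected base that agree on a non-empty open subset agree everywhere. Hence $(\Lambda|_b\circ\Gamma_{KS,b})_\ast$ is the identity on $H^4(X_b)_{prim}$ for every $b\in B$.

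The principal obstacle is precisely this third step: it depends essentially on the smoothness of $\XX\to B$ over the whole parameter space $B$---so that vanishing-cycle phenomena are absent and the direct images really are local systems---and on the connectedness of $B$, which holds since $B$ is Zariski open in a linear subspace of projective space. The first two steps are routine spread manipulations that do not require any new input beyond Lemma \ref{split}.
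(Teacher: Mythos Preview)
Your argument is correct, but it follows a genuinely different path from the paper's. The paper does \emph{not} spread from the generic point. Instead, it first records (via Lemma \ref{split}) that for \emph{every} $b\in B$ there is a decomposition
\[
  \Delta_{X_b}=\Lambda_b\circ\Gamma_{KS,b}+\sum_j c_j\,H_b^j\times H_b^{4-j}\ \ \hbox{in}\ H^8(X_b\times X_b),
\]
packages the ``completely decomposed'' part into a relative correspondence $\Delta_{\XX,prim}$, and then applies Voisin's Hilbert--scheme ``spread'' machine (Proposition \ref{spread}): the set of pairs $(b,\Lambda_b)$ solving the equation $\Delta_{\XX,prim}\vert_b=\Lambda_b\circ\Gamma_{KS}\vert_b$ in cohomology is parametrised by countably many varieties $M_j\to B$; since their union covers $B$, some $M_0$ dominates, and pushing down a universal cycle on $M_0$ yields the relative $\Lambda$.

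Your route---split over $\overline{\C(B)}$, spread to an open $B^\circ$, take closure, then use that $R^8\pi_\ast\QQ$ is a local system so a flat section equal to the identity over $B^\circ$ is the identity everywhere---is perfectly valid here and arguably more elementary, since it avoids the Hilbert--scheme bookkeeping. What makes it work is precisely that $B$ parametrises only \emph{smooth} fibres, so the higher direct images are honest local systems; Voisin's argument, by contrast, never invokes monodromy and would still produce $\Lambda$ in settings where fibrewise solutions exist but no global local--system rigidity is available. Conversely, your argument needs the fibrewise splitting only at the generic point, whereas the paper's requires it at every closed point. In the present situation both hypotheses are trivially satisfied, so the two approaches are interchangeable; the paper chooses Voisin's because Proposition \ref{spread} is stated and reused as a general--purpose tool.
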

   
  \begin{proof} This uses the idea of ``spreading out'' algebraic cycles, as advocated in \cite{V0}, \cite{V1}, \cite{Vo}. Lemma \ref{split}, plus the observation that $\ima\bigl(H^\ast(\PP^5)\to H^\ast(X_b)\bigr)$ is generated by linear subspace sections, gives a decomposition of the diagonal of $X_b$:
    \[ \Delta_{X_b}= \Lambda_b\circ \Gamma_{KS,b}+{\displaystyle \sum}_j c_j (H_b)^j\times (H_b)^{4-j}\ \ \hbox{in}\ H^8(X_b\times X_b)\ ,\]
    where $c_j\in\QQ$ and $H_b\in A^1(X_b)$ is the restriction of an ample class $H\in A^1(\PP^5)$. That is, the relative correspondences
    \[ \Delta_{\XX,prim}:= \Delta_{\XX}- \Bigl( {\displaystyle \sum}_j c_j H^j\times H^{4-j}  \times B\Bigr){}\vert_{\XX\times_B \XX}\ \ \in A^4(\XX\times_B \XX) \]
    and
    \[ \Gamma_{KS}\ \ \in A^5\bigl( \XX\times_B (\Z\times E)\bigr)\]
 have the following property: for any $b\in B$, there exists a correspondence $\Lambda_b\in A^4(Z_b\times E\times X_b)$ such that
  \[ \Delta_{\XX,prim}\vert_b = \Lambda_b\circ (\Gamma_{KS})\vert_b\ \ \in H^8(X_b\times X_b)\ .\]
  
  We now apply Voisin's argument, in the form of proposition \ref{spread} below, to finish the proof.
  \end{proof}  
  
  \begin{proposition}[Voisin \cite{V0}, \cite{V1}]\label{spread} Let $\XX$, $\YY$ and $\Z$ be families over $B$, and assume the morphisms to $B$ are smooth projective and the total spaces are smooth quasi--projective. Let
    \[ \begin{split}  \Gamma&\in\ \  A^i(\XX\times_B \Z)\ ,\\
                               \Psi&\in \ \ A^j(\XX\times_B \YY)\\
                                  \end{split}   \]
 be relative correspondences, with the property that for any $b\in B$ there exists $\Lambda_b\in A^{\ast}(Y_b\times Z_b)$ such that
             \[  \Gamma\vert_b= \Lambda_b\circ (\Psi)\vert_b\ \ \hbox{in}\ H^{2i}(X_b\times Z_b)\ .\]
      Then there exists a relative correspondence
      \[ \Lambda\ \ \in A^{\ast}(\YY\times_B \Z) \]
      with the property that for any $b\in B$
      \[ \Gamma\vert_b=(\Lambda)\vert_b \circ (\Psi)\vert_b\ \ \hbox{in}\ H^{2i}(X_b\times Z_b)\ .\]
   \end{proposition}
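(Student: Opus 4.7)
The plan is to combine a Hilbert-scheme argument with Ehresmann's theorem: produce $\Lambda$ first over a generically finite cover of $B$, descend by pushforward/averaging, and then promote the fiberwise equality from a Zariski dense open of $B$ to every $b\in B$ via a local-system argument.

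First I would parametrize all candidate choices of $\Lambda_b$ by relative Hilbert schemes. Choosing relative polarizations on $\YY\to B$ and $\Z\to B$, every $\Lambda_b$ can be written as a $\QQ$-linear combination of irreducible effective subvarieties of $Y_b\times Z_b$ with prescribed Hilbert polynomials and rational coefficients; the corresponding discrete data ranges over a countable set $\{\alpha\}$, and to each $\alpha$ corresponds a parameter space $T_\alpha\to B$, projective over $B$, carrying a universal relative cycle $\tilde\Lambda_\alpha\in A^\ast\bigl((\YY\times_B \Z)\times_B T_\alpha\bigr)$. Let $T_\alpha'\subset T_\alpha$ denote the locus of points $t$ (lying over $b\in B$) at which $\Gamma\vert_b=(\tilde\Lambda_\alpha\vert_t)\circ \Psi\vert_b$ in $H^{2i}(X_b\times Z_b)$; this vanishing is the fiberwise restriction of a single relative cohomology class on $(\XX\times_B \Z)\times_B T_\alpha$, so $T_\alpha'$ is a closed analytic, and in fact constructible, subset of $T_\alpha$. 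By hypothesis $\bigcup_\alpha \ima(T_\alpha'\to B)=B$, and since $\C$ is uncountable while $\{\alpha\}$ is countable, one constructible image must contain a Zariski dense open of $B$; call the corresponding index $\alpha_0$.

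Next I would select an irreducible component $W\subset T_{\alpha_0}'$ dominating $B$ and, by iteratively cutting with general relative hyperplane sections, reduce to a generically finite surjective morphism $\varphi\colon B'\to B$ together with a relative cycle $\tilde\Lambda\in A^\ast(\YY_{B'}\times_{B'}\Z_{B'})$ such that at every $b'\in B'$ lying over a general $b\in B$, the class $\tilde\Lambda\vert_{b'}$ is a valid $\Lambda_b$. Pushing forward under the finite map $\YY_{B'}\times_{B'}\Z_{B'}\to\YY\times_B \Z$ and dividing by $\deg(\varphi)$ (i.e.\ averaging over the fibers of $\varphi$) produces $\Lambda\in A^\ast(\YY\times_B \Z)$, extended from the étale locus of $\varphi$ to all of $B$ by Zariski closure on the smooth variety $\YY\times_B \Z$, such that $\Gamma\vert_b=\Lambda\vert_b\circ \Psi\vert_b$ in $H^{2i}(X_b\times Z_b)$ for every $b$ in some Zariski dense open $U\subset B$.

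Finally, to upgrade to every $b\in B$, I would consider $\Theta:=\Gamma-\Lambda\circ \Psi\in A^i(\XX\times_B \Z)$. Its cohomology class in $H^{2i}(\XX\times_B \Z,\QQ)$ restricts, via the refined Gysin homomorphism, to a global section of the local system $R^{2i}\pi_\ast\QQ$ on $B$, where $\pi\colon\XX\times_B\Z\to B$ is the smooth proper projection. By construction this section vanishes on $U$; since $B^{\rm an}$ is connected and a section of a $\QQ$-local system vanishing on a nonempty open vanishes identically, $\Theta\vert_b=0$ in $H^{2i}(X_b\times Z_b)$ for every $b\in B$.

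\textbf{Main obstacle.} The delicate step is the Hilbert-scheme/countability argument, where one must combine the algebraicity of the $T_\alpha$, the constructibility of the cohomological vanishing condition inside the parameter space, and the uncountability of $\C$ in order to extract from mere fiberwise existence a single relative cycle over a generically finite cover of $B$. The descent via pushforward/averaging and the concluding local-system extension are essentially formal once the smoothness of $\XX\to B$, $\YY\to B$ and $\Z\to B$ is exploited to guarantee both the local-system structure of $R^{2i}\pi_\ast\QQ$ and the closure-extension of cycles on the smooth ambient fiber products.
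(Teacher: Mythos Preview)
Your proof is correct and follows essentially the same Hilbert-scheme/countability strategy as the paper (which in turn defers to Voisin's argument): parametrize the candidate $\Lambda_b$'s by countably many parameter spaces over $B$, use a Baire-type argument to find one dominating $B$, cut to a generically finite cover, and push forward with averaging. Your final local-system step, promoting the fiberwise cohomological identity from a Zariski dense open to every $b\in B$, is a careful addition that the paper's terse sketch leaves implicit.
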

   
   \begin{proof} The statement is different, but this is really the same Hilbert schemes argument as \cite[Proposition 2.7]{V0}, \cite[Proposition 4.25]{Vo}. The point is that the data of 
   all the
   $(b,\Lambda_b)$ that are solutions to the splitting problem
   \[   \Gamma\vert_b= \Lambda_b\circ (\Psi)\vert_b\ \ \hbox{in}\ H^{2i}(X_b\times Z_b)\ \]
   can be encoded by a countable number of algebraic varieties $p_j\colon M_j\to B$, with universal objects $\Lambda_j\subset \YY\times_{M_j}\Z$, with the property that 
   for $m\in M_j$ and $b=p_j(m)\in B$, we have
     \[  (\Lambda_j)\vert_{m}=\Lambda_b\ \ \hbox{in}\ H^{\ast}(Y_b\times Z_b)\ .\]
     By assumption, the union of the $M_j$ dominate $B$. Since there is a countable number, one of the $M_j$ (say $M_0$) must dominate $B$. Taking hyperplane sections, we may assume $M_0\to B$ is generically finite (say of degree $d$). Projecting $\Lambda_0$ to $\YY\times_B \Z$ and dividing by $d$, we have obtained $\Lambda$ as requested.
   \end{proof}

  For ease of reference, we spell out the following restatement of proposition \ref{splitfam}: 
   
 \begin{corollary}\label{splitfam2} Let
   \[ \Delta_{\XX,prim}\ \ \in A^4(\XX\times_B \XX) \]
   be the ``corrected relative diagonal'' appearing in the proof of proposition \ref{splitfam}.
   Let 
     \[\Gamma_{KS}\ \ \in A^5\bigl( \XX\times_B (\Z\times E)\bigr)\] 
     be a relative Kuga--Satake correspondence as in 
  theorem \ref{GIfam}. There exists a relative correspondence 
    \[\Lambda\in A^4\bigl( (\Z\times E)\times_B   \XX\bigr)\ ,\] 
    such that for any $b\in B$ we have that
  \[  \Bigl( \Delta_{\XX,prim}- \Lambda\circ \Gamma_{KS}\Bigr){}\vert_{X_b\times X_b}=0\ \ \hbox{in}\ H^8(X_b\times X_b)\ .\]
   \end{corollary}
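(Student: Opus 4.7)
My plan is short, since this corollary is essentially a repackaging of Proposition \ref{splitfam}; I would extract the identity already established inside its proof, in three steps.

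First, working fibrewise, I would use the Lefschetz hyperplane theorem for $X_b\subset\PP^5$ to split $H^\ast(X_b)=\ima\bigl(H^\ast(\PP^5)\to H^\ast(X_b)\bigr)\oplus H^4(X_b)_{prim}$, so that the Künneth decomposition of the class of $\Delta_{X_b}$ in $H^8(X_b\times X_b)$ decomposes as a rational combination of monomials $(H_b)^j\times (H_b)^{4-j}$ plus the Künneth projector onto $H^4(X_b)_{prim}$, with coefficients $c_j$ that do not depend on $b$ (they are determined by Hodge numbers, which are locally constant in the smooth family $\XX\to B$). The relative correspondence
\[ \Delta_{\XX,prim}:= \Delta_{\XX}- \Bigl( {\displaystyle \sum}_j c_j H^j\times H^{4-j}\times B\Bigr){}\vert_{\XX\times_B \XX} \]
introduced inside the proof of Proposition \ref{splitfam} is then precisely the one whose fibre $\Delta_{\XX,prim}\vert_b$ realises this primitive Künneth projector cohomologically.

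Next, I would invoke Lemma \ref{split}: for each $b\in B$ it supplies $\Lambda_b\in A^5(Z_b\times E\times X_b)$ with $(\Lambda_b)_\ast\circ(\Gamma_{KS,b})_\ast=\hbox{id}$ on $H^4(X_b)_{prim}$. Since the construction of $\Gamma_{KS,b}$ in Theorem \ref{GIfam} factors through $H^4(X_b)_{prim}$, the map $(\Gamma_{KS,b})_\ast$ automatically vanishes on the complement $\ima\bigl(H^\ast(\PP^5)\to H^\ast(X_b)\bigr)$, so the composition $\Lambda_b\circ\Gamma_{KS,b}$ agrees with the primitive Künneth projector as a class in $H^8(X_b\times X_b)$. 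Comparing with the previous step gives
\[ \Delta_{\XX,prim}\vert_b \ =\ \Lambda_b\circ \Gamma_{KS,b}\ \ \ \hbox{in}\ H^8(X_b\times X_b)\ ,\]
for every $b\in B$.

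Finally --- and this is really the only delicate point --- I would upgrade the family $\{\Lambda_b\}_{b\in B}$ of fibrewise splittings to a single relative correspondence by applying Voisin's spreading argument, in the form of Proposition \ref{spread}, with $\Psi:=\Gamma_{KS}$, $\Gamma:=\Delta_{\XX,prim}$, and the auxiliary family $\Z\times E$ (over $B$ via the first factor) playing the role of $\YY$. The output is a relative correspondence $\Lambda\in A^4\bigl((\Z\times E)\times_B\XX\bigr)$ whose fibrewise restrictions satisfy the required cohomological identity. The smoothness and projectivity hypotheses of Proposition \ref{spread} are immediate from the construction of $\XX$, $\Z$ and $E$ in Notation \ref{family}, so the substance of the proof is entirely carried by the Hilbert scheme / countable-union argument encapsulated in Proposition \ref{spread}.
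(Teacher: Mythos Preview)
Your proposal is correct and follows essentially the same route as the paper. The paper explicitly presents Corollary \ref{splitfam2} as a restatement of Proposition \ref{splitfam} (``For ease of reference, we spell out the following restatement\ldots''), and your three steps---fibrewise diagonal decomposition via Lemma \ref{split}, identification of $\Delta_{\XX,prim}\vert_b$ with $\Lambda_b\circ\Gamma_{KS,b}$ in cohomology, then globalisation via Proposition \ref{spread}---reproduce exactly the argument given in the proof of Proposition \ref{splitfam}. The only extra ingredient you add is the remark that $(\Gamma_{KS,b})_\ast$ factors through $H^4(X_b)_{prim}$; this is not strictly needed (one can always precompose $\Gamma_{KS}$ with the relative primitive projector $\Delta_{\XX,prim}$ to force it), and the paper is equally brief at this juncture.
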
  
   
\subsection{Algebraic cycles in a family}

The second key ingredient in this note is the machinery of ``spread'' as developed by Voisin \cite{V0}, \cite{V1}, \cite{Vo}, in order to deal efficiently with algebraic cycles in a family of varieties. 
This subsection contains a result by L. Fu, which is a version of ``spread'' adapted to dealing with non--complete linear systems.

\begin{proposition}[L. Fu \cite{LFu}]\label{triv} Let $\XX\to B$ be as in notation \ref{family}. Then
  \[    \lim_{\substack{\longrightarrow\\ B^\prime\subset B}}  A^4_{hom}(\XX^\prime\times_{B^\prime}\XX^\prime)=0\ ,\]
  where the direct limit is taken over the open subsets $B^\prime\subset B$. In other words, for an open $B^\prime\subset B$ and a homologically trivial cycle $a\in A^4_{hom}(\XX^\prime\times_{B^\prime}\XX^\prime)$, there is a smaller open $B^{\prime\prime}\subset B^\prime$, such that the restriction of $a$ to the base change $\XX^{\prime\prime}\times_{B^{\prime\prime}}\XX^{\prime\prime}$ is rationally trivial. 
 \end{proposition}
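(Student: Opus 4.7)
The plan is to adapt Voisin's ``spread'' technique \cite{V0}, \cite{V1} to the setting of a linear subsystem, following L.~Fu. The key geometric input is that $\XX\to\bar B$ is the universal cubic fourfold in the linear subsystem $\bar B\subset|\OO_{\PP^5}(3)|$ of cubics of the form $cx_5^3+f(x_0,\ldots,x_4)$, so the self fibre product $\XX\times_{\bar B}\XX$ admits a natural projection $q$ to $\PP^5\times\PP^5$ and is generically a projective bundle over it.

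First I would analyze $q\colon\XX\times_{\bar B}\XX\to\PP^5\times\PP^5$: the fibre over $(p_1,p_2)$ is the linear subsystem of $\bar B$ of cubics through both points, cut out by two linear conditions on $\bar B$. A direct computation shows these conditions are proportional precisely when $p_1=\sigma_\zeta(p_2)$ for one of the three automorphisms $\sigma_\zeta\colon[x_0:\ldots:x_5]\mapsto[x_0:\ldots:x_4:\zeta x_5]$ with $\zeta^3=1$. Over the open $U:=\PP^5\times\PP^5\setminus\bigcup_\zeta\Gamma_{\sigma_\zeta}$, the map $q$ is a Zariski locally trivial projective bundle of relative dimension $\dim\bar B-2$. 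The projective bundle formula, combined with $A^*(U)$ being a quotient of $A^*(\PP^5\times\PP^5)=\QQ[h_1,h_2]/(h_1^6,h_2^6)$, then shows that $A^*(q^{-1}(U))$ is spanned by the monomials $h_1^ah_2^bH^k$, with $H$ the relative hyperplane class. These monomials are linearly independent in cohomology by the analogous cohomological projective bundle formula, so $A^*(q^{-1}(U))\hookrightarrow H^*(q^{-1}(U))$; in particular $A^4_{hom}(q^{-1}(U))=0$.

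Now let $a\in A^4_{hom}(\XX^\prime\times_{B^\prime}\XX^\prime)$. Its restriction $a|_V$ to $V:=q^{-1}(U)\cap(\XX^\prime\times_{B^\prime}\XX^\prime)$ is hom-trivial, and after possibly shrinking $B^\prime$ to a smaller open $B^{\prime\prime}\subset B^\prime$---to absorb boundary contributions supported on the preimage in $q^{-1}(U)$ of $\bar B\setminus B^{\prime\prime}$---the restriction becomes rationally trivial. By the localization sequence for the open immersion $V\cap(\XX^{\prime\prime}\times_{B^{\prime\prime}}\XX^{\prime\prime})\hookrightarrow\XX^{\prime\prime}\times_{B^{\prime\prime}}\XX^{\prime\prime}$, the restricted class $a|_{\XX^{\prime\prime}\times_{B^{\prime\prime}}\XX^{\prime\prime}}$ is then supported on the closed complement, which consists of three irreducible components: the restrictions to $B^{\prime\prime}$ of the relative graphs $\Gamma_{\sigma_\zeta}$ of the fibrewise automorphisms $\sigma_\zeta$, each of codimension exactly $4$. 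Hence $a|_{\XX^{\prime\prime}\times_{B^{\prime\prime}}\XX^{\prime\prime}}=\sum_\zeta\lambda_\zeta[\Gamma_{\sigma_\zeta}]$. The three classes $[\Gamma_{\sigma_\zeta}]$ are linearly independent in $H^8$: restricting to a generic fibre $X_b\times X_b$, they realise the three pairwise distinct correspondences $\mathrm{id},\sigma_\omega^*,\sigma_{\omega^2}^*$ on $H^*(X_b)$, which are linearly independent because $\sigma_\omega$ acts on $H^4(X_b)_{prim}$ with all three cube roots of unity as eigenvalues---the very feature underlying the van Geemen--Izadi construction of theorem \ref{thGI}. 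The hypothesis that $a$ is homologically trivial therefore forces $\lambda_\zeta=0$, and $a|_{\XX^{\prime\prime}\times_{B^{\prime\prime}}\XX^{\prime\prime}}=0$.

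The main obstacle is descending the vanishing $A^4_{hom}(q^{-1}(U))=0$ on the total space to the restriction $V$: this requires absorbing ``vertical'' boundary contributions by shrinking $B^\prime$, which is precisely the content of L.~Fu's refinement of the spread technique for non-complete linear systems. The secondary but essential algebraic input is the linear independence of the three graph classes in cohomology, which rests on the eigenvalue structure of the $\mu_3$-action singled out in the van Geemen--Izadi setup.
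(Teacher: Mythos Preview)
Your sketch is essentially an unpacking of L.~Fu's argument, which is exactly what the paper cites (it simply checks that $\bar B=\PP\bigl(\oplus_{\underline{\alpha}\in \Lambda_0}\C\underline{x}^{\underline{\alpha}}\bigr)$ has the right monomial form and invokes \cite[Proposition~4.1]{LFu}). Your projective--bundle analysis of $q\colon\XX\times_{\bar B}\XX\to\PP^5\times\PP^5$, the identification of the non--bundle locus with $\bigcup_\zeta\Gamma_{\sigma_\zeta}$, and the linear--independence argument for the three graph classes are all correct in outline.

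Two points deserve comment. First, a minor inaccuracy: $\sigma_\omega$ does \emph{not} act on $H^4(X_b)_{prim}$ with eigenvalue~$1$ (indeed $H^4(X_b)_{prim}\cap H^4(X_b)^{\ZZ/3\ZZ}=0$, cf.\ \cite{BCS}); only $\omega,\omega^2$ occur there. Your conclusion survives because the eigenvalue~$1$ does appear on $H^0,H^2,\ldots$ (and on the class $h^2\in H^4$), so the three graphs remain independent on $H^\ast(X_b)$.

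Second, and more substantively: the step you flag as ``the main obstacle''---passing from $A^4_{hom}(q^{-1}(U))=0$ on the full total space to the vanishing over an open $B^{\prime\prime}\subset B$---is not achieved simply by ``absorbing boundary contributions'' via shrinking. The restriction $V=q^{-1}(U)\vert_{B^\prime}$ is no longer a projective bundle (its fibres over $U$ are Zariski opens in $\PP^{N-2}$), and there is no direct reason why $A^4_{hom}(V)$ should vanish. As the paper's remark after the proof indicates, L.~Fu's actual mechanism here (\cite[Proposition~4.3]{LFu}) uses a specific feature of cubic fourfolds: their Chow motive ``does not exceed the size of surfaces'', which bounds the contribution of the boundary $\bar B\setminus B^\prime$. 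Your sketch is right to defer this to L.~Fu, but your one--line description underplays what is involved; it is not a generic localisation argument.
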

 
 \begin{proof} This is \cite[Proposition 4.1]{LFu}, applied to the family $\XX\to B$. 
 In the notation of \cite{LFu}, the closure $\bar{B}$ of the base $B$ can be written as
  $ \bar{B}=\PP\bigl(\oplus_{\underline{\alpha}\in \Lambda_0} \C \underline{x}^{\underline{\alpha}}\bigr)$, where
    \[ \Lambda_0:= \Bigl\{ \underline{\alpha}=(\alpha_0,\ldots,\alpha_5)\in \NN^5\ \vert\  \alpha_0+\cdots +\alpha_5=3\ ,\ \alpha_5=0\ \hbox{mod\ } 3\Bigr\}\ .\]
 This ensures that the proof of \cite[Proposition 4.1]{LFu} applies to the family $\XX\to B$.   
 
 (NB: to be sure, the statement of \cite[Proposition 4.1]{LFu} is geared towards families of cubic fourfolds having a finite order polarized automorphism that is symplectic, whereas the family $\XX\to B$ of notation \ref{family} corresponds to cubics invariant under a polarized order $3$ automorphism that is {\em non--symplectic\/}. However, the proof of
 \cite[Proposition 4.1]{LFu} only uses the description $\bar{B}=\PP\bigl(\oplus_{\underline{\alpha}\in \Lambda_j} \C \underline{x}^{\underline{\alpha}}\bigr)$, and {\em not\/} the symplectic/non--symplectic behaviour of the automorphism.)
  \end{proof}

   \begin{remark} Alternatively, a slightly different proof of proposition \ref{triv} could be given as follows. There is a natural map $\PP^5\to\PP:=\PP(1^5,3)$, where $\PP(1^5,3)$ is a weighted projective space \cite{Dol}.
   The family $\bar{\XX}\to\bar{B}$ corresponds to (hypersurfaces in $\PP^5$ that are inverse images of) the complete linear system $\PP H^0(\PP,\OO_\PP(3))$. Since the sheaf $\OO_{\PP}(3)$ is locally free and very ample \cite{Del}, the stratification argument of \cite{tod} applies to prove that
    \[ A_\ast^{hom}(\bar{\XX}\times_{\bar{B}}\XX)=0\ .\]
  Next, to pass to opens $B^\prime\subset\bar{B}$, we can use \cite[Proposition 4.3]{LFu} (which is based on the fact that ``the Chow motive of a cubic fourfold does not exceed the size  
 of Chow motives of surfaces'', to cite \cite[Section 4.2]{LFu}).  
 
 (NB: this alternative proof avoids recourse to \cite[Proposition 4.2]{LFu}, and only uses the easier \cite[Proposition 4.3]{LFu}.)
   \end{remark}

\section{Main}

\begin{theorem}\label{main} Let $X\subset\PP^5$ be a smooth cubic fourfold, defined by an equation
  \[  x_5^3+f(x_0,\ldots,x_4)=0\ ,\]
  where $f(x_0,\ldots,x_4)$ defines a smooth cubic threefold. Then $X$ has finite--dimensional motive (of abelian type).
  \end{theorem}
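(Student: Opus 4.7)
The plan is to combine the family version of the Kuga--Satake relation (corollary \ref{splitfam2}) with L.~Fu's generic triviality result (proposition \ref{triv}) to exhibit $h(X)$ as a direct summand of the motive of an abelian variety, from which finite--dimensionality follows by Kimura's theorem for abelian varieties together with stability of finite--dimensional motives under direct summands.

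It suffices to handle the ``primitive'' summand of $h(X_b)$ defined by $\Delta_{X_b,prim}$: the complementary summand involves only products of hyperplane classes and is a sum of Tate motives, hence already finite--dimensional. So the goal is to upgrade the cohomological identity $\Delta_{X_b,prim} = \Lambda|_b \circ \Gamma_{KS}|_b$ furnished by corollary \ref{splitfam2} to an identity in $A^4(X_b \times X_b)$. Once obtained, composition with the correspondence of corollary \ref{abvar}---which replaces $h^5(Z_b)$ by the motive of the abelian variety $J$---realises the primitive motive of $X_b$ as a summand of the motive of the abelian variety $J \times E$, and the conclusion follows.

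The heart of the proof is this upgrade step, which I would carry out by applying proposition \ref{triv} to the relative correspondence
\[ \alpha := \Delta_{\XX,prim} - \Lambda \circ \Gamma_{KS} \ \in\ A^4(\XX \times_B \XX). \]
By corollary \ref{splitfam2}, $\alpha$ is fibrewise cohomologically trivial. A Leray spectral sequence / spread argument for the smooth family $\XX \times_B \XX \to B$ (as in \cite{V0}, \cite{V1}) shows that, after restricting to a Zariski open $B' \subset B$ containing a given $b_0$ and subtracting a correction built from cycles pulled back from proper closed subvarieties of $B'$ (which restrict to zero on the fibre $X_{b_0} \times X_{b_0}$), the modified version of $\alpha$ is homologically trivial on the total space $\XX' \times_{B'} \XX'$. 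Proposition \ref{triv} then produces a further open $B'' \ni b_0$ on which this modified cycle is rationally trivial. Restricting to $X_{b_0}$ yields the desired Chow identity, and the argument applies verbatim to every prescribed $b_0 \in B$.

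The main obstacle is the passage from fibrewise cohomological triviality of $\alpha$ to honest homological triviality on the total space of $\XX \times_B \XX$, which is a prerequisite for invoking proposition \ref{triv}. Handling this cleanly requires the Leray filtration machinery for smooth projective families together with the ability to locally annihilate positive Leray pieces by Zariski shrinking and correction by ``vertical'' cycles---precisely the content of Voisin's spread technique \cite{V0}, \cite{V1}. Once this cohomological correction is in place, the remaining ingredients (proposition \ref{triv}, corollary \ref{abvar}, Kimura's theorem, and the closure of finite--dimensional motives under direct summands) assemble formally to give finite--dimensionality of $h(X)$.
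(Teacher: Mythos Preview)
Your overall strategy matches the paper's, but the ``upgrade'' step has two concrete gaps.

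First, the correction furnished by Voisin's Leray argument (\cite[Lemmas 3.11 and 3.12]{V0}) is a cycle $\gamma\in A^4(\PP^5\times\PP^5)$ restricted from the ambient space, not a cycle supported over proper closed subvarieties of $B'$; in particular its restriction to any fibre is a nonzero ``completely decomposed'' term $R=\sum_i c_i\, H^i\times H^{4-i}$, not zero as you assert. The paper absorbs this residual $R$ by introducing a ``primitive diagonal'' $\Delta^-_{X_b}$ chosen so that $(i_b\times i_b)_\ast \Delta^-_{X_b}=0$ in $A^6(\PP^5\times\PP^5)$, which forces $R\circ\Delta^-_{X_b}=0$ in $A^4(X_b\times X_b)$ and hence $\Delta^-_{X_b}=\Lambda_b\circ\Gamma_{KS,b}\circ\Delta^-_{X_b}$; this step is missing from your outline.

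Second, and more seriously, proposition \ref{triv} only produces \emph{some} smaller open $B''\subset B'$; nothing guarantees that a prescribed $b_0$ lies in $B''$. The paper first obtains the Chow identity for $b\in B''$ (i.e.\ generically) and then invokes a separate specialization lemma (\cite[Lemma 3.2]{Vo}) to extend the fibrewise identity to every $b\in B$. Without this extension step your argument only establishes finite--dimensionality for the general member of the family. (Your final detour through corollary \ref{abvar} is fine but unnecessary: the paper simply observes that $A^j_{AJ}(Z_b)=0$ for all $j$, so $Z_b$ already has finite--dimensional motive by \cite[Theorem 4]{V2}, whence $h(Z_b)\otimes h(E)$ is finite--dimensional directly.)
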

  
 \begin{proof} As before, let 
   \[ \XX\to B\]
   denote the family of smooth cubic fourfolds as in notation \ref{family}. We have seen (theorem \ref{GIfam})
  that there is a relative Kuga--Satake correspondence 
    \[\Gamma_{KS}\ \ \in A^5\bigl( \XX\times_B (\Z\times E)\bigr)\] 
    (where $\Z$ is a family of cubic fivefolds and $E$ is a fixed elliptic curve). We have also seen (corollary \ref{splitfam2}) there exists a   
     ``relative splitting''. That is, the relative correspondence
     \[ \DD:= \Delta_{\XX,prim}- \Lambda\circ \Gamma_{KS}\ \ \in A^4(\XX\times_B \XX)\] 
     has the property that restriction to any fibre is homologically trivial:   
     \[ \DD\vert_{X_b\times X_b}=0\ \ \hbox{in}\ H^8(X_b\times X_b)\ \ \ \hbox{for\ all\ }b\in B\ .\]

  We now proceed to make $\DD$ globally homologically trivial.
   The Leray spectral sequence argument of \cite[Lemmas 3.11 and 3.12]{V0} shows that there exists a cycle $\gamma\in A^4(\PP^5\times\PP^5)$ such that after shrinking $B$ (i.e. after replacing the parameter space $B$ by a smaller non--empty Zariski open subset $B^\prime$), one has   
          \[  \bigl(\DD-\gamma\bigr)\vert_{\XX^\prime\times_{B^\prime} \XX^\prime}  =0\ \ \hbox{in}\ H^8( \XX^\prime\times_{B^\prime} \XX^\prime)\ .\]
     In light of proposition \ref{triv}, this implies there exists a smaller non--empty Zariski open $B^{\prime\prime}\subset B^\prime$ and a rational equivalence
             \[  \bigl(\DD-\gamma\bigr)\vert_{\XX^{\prime\prime}\times_{B^{\prime\prime}} \XX^{\prime\prime}}   =0\ \ \hbox{in}\ A^4( \XX^{\prime\prime}\times_{B^{\prime\prime}} 
             \XX^{\prime\prime})\ .\]
          
      In particular, when restricting to a fibre we find that    
      \[ \bigl(  \DD-\gamma\bigr)\vert_{X_b\times X_b}=0\ \ \   \hbox{in}\ A^4(X_b\times X_b)\ \ \ \forall b\in B^{\prime\prime}\ .\]
      Now, \cite[Lemma 3.2]{Vo} implies that the same actually holds for {\em every\/} fibre over $B$, i.e.
       \[ \bigl(  \DD-\gamma\bigr)\vert_{X_b\times X_b}=0\ \ \   \hbox{in}\ A^4(X_b\times X_b)\ \ \ \forall b\in B^{}\ .\]              
            Plugging in the definition of $\DD$, this implies that for any $b\in B$, we have a rational equivalence
     \begin{equation}\label{rat}  \Delta_{X_b}= \Lambda_b\circ \Gamma_{KS,b}+R\ \ \hbox{in}\ A^4(X_b\times X_b)\ ,\end{equation}
     where $R$ is a sum of ``completely decomposed correspondences''
       \[ R={\displaystyle \sum}_i R_i={\displaystyle \sum}_i  c_i H^i\times H^{4-i}\ \ \in A^4(X_b\times X_b)\ \]
       (with $c_i\in\QQ$ and $H\in \ima\bigl(A^1(\PP^5)\to A^1(X_b)\bigr)$ an ample class). 
       
       We define a ``primitive diagonal''
       \[ \Delta^-_{X_b}:=\Delta_{X_b}+{\displaystyle \sum}_i  d_i H^i\times H^{4-i}\ \ \in A^4(X_b\times X_b)\ ,\]
       where the constants $d_i$ are such that the push--forward
       \[  (i_b\times i_b)_\ast (\Delta^-_{X_b})=0\ \ \hbox{in}\ A^6(\PP^5\times\PP^5)\]
       (here $i_b$ denotes the inclusion $X_b\to\PP^5$). Since the correspondence $R$ is the restriction of something from $\PP^5\times\PP^5$, we have that
       \[ R\circ \Delta^-_{X_b}=0\ \ \hbox{in}\ A^4(X_b\times X_b)\ .\]
       It thus follows from equality (\ref{rat}) that
       \[  \Delta^-_{X_b}= \Lambda_b\circ \Gamma_{KS,b}\circ \Delta^-_{X_b}\ \ \hbox{in}\ A^4(X_b\times X_b)\ ,\]
       i.e. the homomorphism of motives
       \[ (X_b, \Delta_{X_b}^-,0)\ \to\ h(Z_b)\otimes h(E)(-1)\ \ \hbox{in}\ \MM_{\rm rat}\]    
       has a left--inverse. This implies there also is a homomorphism
       \[ h(X_b)\ \to\ h(Z_b)\otimes h(E)(-1)\oplus \bigoplus_i \LL(m_i)\ \ \hbox{in}\ \MM_{\rm rat}\ ,\]
       exhibiting $h(X_b)$ as a direct summand of the right--hand--side. Now we note that the cubic fivefold $Z_b$
       has
       \[  A^j_{AJ}(Z_b)=0\ \ \hbox{for\ all\ }j\ \]
       (\cite{Lew}, or \cite{Ot} or \cite{HI}).
       This implies (using \cite[Theorem 4]{V2}) that the fivefold $Z_b$ has finite--dimensional motive. Since $E$ is a curve, $h(Z_b)\otimes h(E)$ is also a finite--dimensional motive, and so we have exhibited
       $h(X_b)$ as direct summand of a finite--dimensional motive.

    \end{proof}

For later use, we observe that we can also obtain a version of corollary \ref{abvar} on the level of Chow motives:

\begin{corollary}\label{KSab} Let $X$ be a smooth cubic fourfold as in theorem \ref{main}. There exist an abelian variety $A$ of dimension $g=22$, and a homomorphism
  \[  f\colon\ \ h(X)\ \to\ h^{2g-2}(A)(3-g)\oplus \bigoplus_j \LL(m_j)\ \ \hbox{in}\ \MM_{\rm rat}\ , \]
 which identifies $h(X)$ with a direct summand of the right--hand--side.
  
  (In particular, there is a correspondence $\Psi\in A^{g+1}(X\times A)$ inducing split injections
  \[ \Psi_\ast\colon\ \ A^3_{hom}(X)\ \to\ A^{g}_{(2)}(A)\ .)\]
   \end{corollary}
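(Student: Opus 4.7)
The plan is to refine the proof of theorem \ref{main} by replacing the cubic fivefold $Z$ with the abelian variety $A:=J\times E$, where $J:=J^3(Z)$ is the intermediate Jacobian of the associated cubic fivefold. By Collino \cite{Col}, $J$ has dimension $h^{2,3}(Z)=21$, so $\dim A = g = 22$.

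The proof of theorem \ref{main} realises $h(X)$ as a direct summand in $\MM_{\rm rat}$ of $h(Z)\otimes h(E)(-1) \oplus \bigoplus_i \LL(m_i)$. The first task is to replace $h(Z)$ by (a Tate twist of) $h^1(J)$. Since $A^j_{AJ}(Z)=0$ for all $j$, Vial \cite[Theorem 4]{V2} makes $h(Z)$ finite--dimensional; combined with the standard hyperplane projectors, one has $h(Z)\cong h^5(Z)\oplus \bigoplus_k \LL^{n_k}$ in $\MM_{\rm rat}$. The Collino correspondence used in corollary \ref{abvar}'s proof yields an isomorphism $h^1(J)\cong h^5(Z)$ (up to Tate twist) in $\MM_{\rm hom}$; since both motives are finite--dimensional of abelian type, Kimura--O'Sullivan nilpotence promotes this to an isomorphism in $\MM_{\rm rat}$. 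Substituting and expanding via $h(E)=\mathbf{1}\oplus h^1(E)\oplus \LL$, the only ``mixed'' summand of $h(Z)\otimes h(E)(-1)$ (i.e., neither Tate, nor a Tate twist of $h^1$ of a single factor) is $h^1(J)\otimes h^1(E)$ up to Tate twist, which is a K\"unneth summand of $h^2(A)=h^2(J)\oplus h^1(J)\otimes h^1(E)\oplus h^2(E)$.

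Cohomologically, the Kuga--Satake correspondence of theorem \ref{thGI} factors through the single subspace $H^1(J)\otimes H^1(E)\subset H^2(A)$ (see \cite[Corollary 5.3]{GI} and the proof of theorem \ref{thGI}), while the algebraic classes in $h(X)$ are absorbed into the Tate summands. Applying Kimura--O'Sullivan nilpotence a second time -- valid since $h(X)$ and $h(A)$ are both finite--dimensional -- lifts this cohomological factorisation to $\MM_{\rm rat}$, exhibiting $h(X)$ as a direct summand of $h^2(A)(1)\oplus \bigoplus_j \LL(m^\prime_j)$. Finally, the identification $h^{2g-2}(A)(3-g)\cong h^2(A)(1)$ follows from Poincar\'e duality $h^{2g-2}(A)\cong h^2(A)^\vee\otimes \LL^g$ together with $h^1(A)^\vee\cong h^1(A)(-1)$ (induced by any polarisation of $A$), which gives $h^2(A)^\vee\cong h^2(A)(-2)$. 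The ``in particular'' parenthetical then follows from the action on Chow groups: since $\Psi\in A^{g+1}(X\times A)$ factors through $h^2(A)$, the image $\Psi_\ast(A^3_{hom}(X))$ lies in the Beauville piece $A^g_{(2)}(A)$, and the companion correspondence splits this injection.

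The main obstacle is the twofold passage from $\MM_{\rm hom}$ to $\MM_{\rm rat}$ -- first for the Collino isomorphism $h^1(J)\cong h^5(Z)$, and second for the factorisation of Kuga--Satake through $h^2(A)$. Both are controlled by Kimura--O'Sullivan nilpotence, which applies because every motive in sight is finite--dimensional: $h(X)$ by theorem \ref{main}, $h(Z)$ by Vial, and $h(A)$, $h(E)$ because they are abelian.
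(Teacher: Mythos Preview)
Your argument is correct and follows essentially the same route as the paper's proof: both start from the split injection $h(X)\hookrightarrow h(Z)\otimes h(E)(-1)\oplus\bigoplus\LL(m_i)$ obtained in the proof of theorem~\ref{main}, replace the $Z$--part by $h^1(J)$ via Vial~\cite{V2} and the proof of corollary~\ref{abvar}, and then pass from $h^2(A)(1)$ to $h^{2g-2}(A)(3-g)$. The only cosmetic differences are that the paper phrases the last step as ``composing with a Lefschetz operator'' (K\"unnemann) whereas you phrase it via Poincar\'e duality plus the polarisation isomorphism $h^1(A)^\vee\cong h^1(A)(-1)$, and that you make explicit the two appeals to Kimura--O'Sullivan nilpotence which the paper's proof leaves implicit. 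One small redundancy: your first nilpotence step (lifting the Collino isomorphism $h^5(Z)\cong h^1(J)$ from $\MM_{\rm hom}$ to $\MM_{\rm rat}$) is not needed, since \cite[Theorem~4]{V2} already furnishes this isomorphism directly in $\MM_{\rm rat}$ for any variety with $A^j_{AJ}=0$ for all $j$.
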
 

\begin{proof} The proof of theorem \ref{main} gives a homomorphism
  \[  h(X)\ \to\ h^6(Z\times E)(-1)\oplus \bigoplus_i \LL(m_i)\ \ \ \hbox{in}\ \MM_{\rm rat} \]
  admitting a left--inverse, where $Z$ is a cubic fivefold.
  
 We have seen (in the proof of corollary \ref{abvar}) that there also exists a homomorphism
 \[ h(Z\times E)\ \to\ h^2(A)(2)\oplus \bigoplus_j \LL(m_j)\ \ \ \hbox{in}\ \MM_{\rm rat} \]
  admitting a left--inverse.  
  
  Combining these two, we obtain a homomorphism
  \[ h(X)\ \to\ h^2(A)(1)\oplus \bigoplus_j \LL(m_j)\ \ \ \hbox{in}\ \MM_{\rm rat} \]
  admitting a left--inverse. Composing with a Lefschetz operator on $A$, one obtains a homomorphism
     \[  f\colon\ \ h(X)\ \to\ h^{2g-2}(A)(3-g)\oplus \bigoplus_j \LL(m_j)\ \ \hbox{in}\ \MM_{\rm rat}\  \]
     that admits a left--inverse, i.e. $h(X)$ identifies with a direct summand of the right--hand--side.  
 \end{proof} 
  
 \begin{remark}\label{voisinK3} The argument used to prove theorem \ref{main} is hardly original, and I do not claim credit for this argument. Indeed, a similar use of the Kuga--Satake construction in a family appears in \cite{V8}. More precisely: Voisin proves in \cite[Theorem 0.7]{V8} that {\em if\/} the variational Hodge conjecture is true, then the Kuga--Satake construction is algebraic, and consequently a certain large family of $K3$ surfaces (obtained as sections of a vector bundle on a rationally connected variety) has finite--dimensional motive.
 
 It is also worth mentioning that an explicit Kuga--Satake construction for the $4$--dimensional subfamily of cubics of the form
   \[  x_5^3 + x_4^3 +f(x_0,\ldots,x_3)=0 \]
  already appears in \cite[Example 4.2]{V9}. This construction in \cite{V9} is mentioned by van Geemen as inspiration for his general theory of half twist \cite[Introduction]{vG2}. 
 \end{remark}

 \begin{remark} The family of cubic fourfolds $X$ of theorem \ref{main} is studied from a lattice--theoretic viewpoint in \cite[Example 6.4]{BCS}. Among other things, they prove that the natural $\ZZ/3\ZZ$ action (defined by the automorphism we denoted $\alpha_4$ in the proof of theorem \ref{GIfam} above) has the property that
   \[ \dim H^4(X)^{\ZZ/3\ZZ}=1\ ,\]
   and so
   \[ H^4(X)_{prim}\cap H^4(X)^{\ZZ/3\ZZ}=0\ .\]  
   \end{remark}

\section{Consequences}

\subsection{Bloch conjecture}

\begin{corollary}\label{corbloch} Let $X$ be a cubic fourfold as in theorem \ref{main}. Let $\Gamma\in A^4(X\times X)$ be a correspondence such that
  \[  \Gamma_\ast\colon\ \ H^{3,1}(X)\ \to\ H^{3,1}(X)\]
  is the identity. Then
  \[ \Gamma_\ast\colon\ \ A^3_{hom}(X)\ \to\ A^3_{hom}(X)\]
  is an isomorphism.
 \end{corollary}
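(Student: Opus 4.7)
The plan is to follow the classical Bloch--Kimura template for the Bloch conjecture, leveraging three ingredients: finite-dimensionality of $h(X)$ (theorem~\ref{main}), Kimura nilpotence (theorem~\ref{nilp}), and the algebraic Kuga--Satake embedding provided by corollary~\ref{KSab}.

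First I would equip $h(X)$ with a refined Chow--K\"unneth decomposition. Using finite-dimensionality of $h(X)$, combined with corollary~\ref{KSab} (which embeds $h(X)$ into the motive of an abelian variety plus Lefschetz summands) and the Deninger--Murre Chow--K\"unneth decomposition of abelian varieties, I can write $\Delta_X = \sum_i \pi^{2i}$ in $A^4(X \times X)$. Since $H^3(X) = H^5(X) = 0$, only even projectors appear. I further refine $\pi^4 = \pi^4_{alg} + \pi^4_{tr}$ so that the cohomology realization of $\pi^4_{tr}$ is the transcendental Hodge structure $T_X \subset H^4(X)_{prim}$ (the orthogonal complement of the algebraic classes). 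I then verify that $(\pi^4_{tr})_\ast$ acts as the identity on $A^3_{hom}(X)$: the vanishing $H^3(X)=H^5(X)=0$ gives $A^3_{hom}(X) = A^3_{AJ}(X)$; the projectors $\pi^{2i}$ for $i \neq 2$ are built from the hyperplane class and annihilate $A^3_{hom}(X)$ by the projection formula; and $(\pi^4_{alg})_\ast \alpha = \sum_j a_j (\int_X [Z_j]\cdot[\alpha])[Z'_j]$ vanishes for $\alpha \in A^3_{hom}(X)$ since $[\alpha] = 0$ cohomologically.

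Second, I observe that $T_X$ is a simple $\QQ$-Hodge structure of K3 type. Any proper Hodge sub-structure $T' \subset T_X$ has either $T'^{3,1} = T_X^{3,1}$ (the one-dimensional top piece) or $T'^{3,1} = 0$; in the latter case $T'$ is of pure type $(2,2)$ and hence algebraic, contradicting the transcendence of $T_X$; in the former case a polarization/Schur argument forces $T' = T_X$. By Schur's lemma and polarizability, $E := \mathrm{End}_{HS}(T_X)$ is a field, acting on the one-dimensional complex line $T_X^{3,1}$ via an embedding $E \hookrightarrow \C$. The hypothesis that $\Gamma_\ast$ acts as $1 \in \C$ on $T_X^{3,1}$ therefore forces $\Gamma_\ast|_{T_X} = \mathrm{id}$.

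Third, consider $\Psi := \pi^4_{tr} \circ (\Gamma - \Delta_X) \circ \pi^4_{tr} \in A^4(X \times X)$. By the preceding paragraph, $\Psi$ acts as zero on $H^\ast(X)$, hence is homologically (and numerically) trivial. Kimura's theorem (theorem~\ref{nilp}) gives $N \in \NN$ with $\Psi^{\circ N} = 0$. Therefore the endomorphism $\pi^4_{tr} \circ \Gamma \circ \pi^4_{tr} = \pi^4_{tr} + \Psi$ of $t^4(X) := (X, \pi^4_{tr}, 0)$ is invertible in $\mathrm{End}_{\MM_{\rm rat}}(t^4(X))$ with explicit inverse $\sum_{k=0}^{N-1}(-1)^k \Psi^{\circ k} \circ \pi^4_{tr}$. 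Combined with the first paragraph, this implies $\Gamma_\ast \colon A^3_{hom}(X) \to A^3_{hom}(X)$ is an isomorphism.

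The main obstacle is the first paragraph, specifically constructing $\pi^4_{tr}$ and verifying $(\pi^4_{tr})_\ast = \mathrm{id}$ on $A^3_{hom}(X)$. This should follow from transferring the Beauville--Deninger--Murre decomposition through the embedding of corollary~\ref{KSab}, but the Tate-twist bookkeeping and the algebraic/transcendental splitting of the middle piece require care. The simplicity argument in the second paragraph should remain valid in the presence of the $\QQ(\zeta)$-action from the non-symplectic order 3 automorphism of $X$.
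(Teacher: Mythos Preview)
Your argument is correct, but the paper's proof is considerably more economical. After the same second step (simplicity of $H^4_{tr}(X)$ forcing $\Gamma_\ast=\hbox{id}$ on the transcendental cohomology), the paper does \emph{not} build a refined Chow--K\"unneth decomposition or a projector $\pi^4_{tr}$. Instead it writes $\Gamma=\Delta_X+\gamma$ in $H^8(X\times X)$ with $\gamma$ supported on $(Y\times X)\cup(X\times Y)$ for some codimension~$2$ subvariety $Y\subset X$, applies Kimura nilpotence directly to $\Gamma-\Delta_X-\gamma$, and observes that $\gamma$ acts trivially on $A^3_{hom}(X)=A^3_{AJ}(X)$ for dimension reasons, yielding $(\Gamma^{\circ N})_\ast=\hbox{id}$. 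In particular the paper's proof uses only theorem~\ref{main} and the (known) Lefschetz/Hodge standard conjecture for $X$; it invokes neither corollary~\ref{KSab} nor Deninger--Murre.

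Your route front-loads more machinery but works cleanly inside the motive $t^4(X)$; it is essentially the framework the paper adopts later, in the ``Indecomposability'' subsection, where the transcendental projector is obtained via Vial's refined Chow--K\"unneth theorem rather than by pulling back along the abelian embedding. One small remark: to get the CK decomposition and the splitting $\pi^4=\pi^4_{alg}+\pi^4_{tr}$ you do not actually need corollary~\ref{KSab}; finite-dimensionality together with $B(X)$ already lets you lift the Künneth and algebraic/transcendental idempotents from $\MM_{\rm hom}$ to $\MM_{\rm rat}$.
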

 
 \begin{proof} As is well--known, this is a consequence of finite--dimensionality; we include a proof for completeness' sake. Using an argument involving the truth of the Hodge conjecture for $X$ and non--degeneracy of the cup--product pairing (similar to \cite[Proof of corollary 3.11]{V8} and \cite[Lemma 2.5]{Ped}, where this is done for $K3$ surfaces), the assumption implies that
 \[ \Gamma_\ast\colon\ \  H^4_{tr}(X)\ \to\ H^4_{tr}(X)\ \]
 is also the identity, where $H^4_{tr}$ denotes the orthogonal complement (under the cup--product pairing) of $N^2 H^4(X)$. It follows there is a cohomological decomposition 
 \[ \Gamma=\Delta_X+\gamma\ \ \in H^8(X\times X)\ ,\]
 where $\gamma$ is a cycle supported on $(Y\times X)\cup(X\times Y)$, for some $Y\subset X$ of codimension $2$. That is, the cycle
 \[ \Gamma-\Delta_X-\gamma\ \ \in A^4(X\times X)\]
 is homologically trivial. Using finite--dimensionality of $X$, this cycle is nilpotent. The cycle $\gamma$ does not act on $A^3_{hom}(X)=A^3_{AJ}(X)$ for dimension reasons. It follows that
   \[ (\Gamma^{\circ N})_\ast=\hbox{id}\colon\ \ A^3_{hom}(X)\ \to\ A^3_{hom}(X) \]
   for some $N\in\NN$.
 \end{proof}
 
 \begin{remark} Corollary \ref{corbloch} establishes a weak form of the Bloch conjecture \cite{B}. Recall that the Bloch conjecture (in the special case of a cubic fourfold $X$) predicts that if a correspondence acts as the identity on $H^{3,1}(X)$, then it acts as the identity on $A^3_{hom}(X)$. 
 
 There is related work of L. Fu \cite{LFu}, proving that for {\em any\/} cubic fourfold, Bloch's conjecture is true for the graph of an automorphism acting as the identity on $H^{3,1}(X)$.
 \end{remark}

\subsection{The Fano variety of lines}

\begin{corollary}\label{fano} Let $X$ be a smooth cubic fourfold as in theorem \ref{main}, and let $F(X)$ be the Fano variety of lines on $X$. Then $F(X)$ has finite--dimensional motive.
\end{corollary}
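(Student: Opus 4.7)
The plan is to deduce the corollary from Theorem \ref{main} by exhibiting $h(F(X))$ as a direct summand of a motive built from $h(X)$ and Lefschetz motives via direct sums, tensor products, and Tate twists. Since the class of finite--dimensional motives is closed under all of these operations as well as under passage to direct summands \cite{K}, this will suffice.

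The natural correspondence to use is the one coming from the universal family of lines $P\subset X\times F(X)$, a $\PP^1$--bundle over $F(X)$ of dimension $5$. The class $[P]\in A^5(X\times F(X))$ and its transpose give incidence correspondences between $X$ and $F(X)$. By the classical work of Beauville--Donagi, a suitable modification of this correspondence (involving the Pl\"ucker polarization and the tautological subbundle on the Grassmannian) induces the Hodge isomorphism $H^4(X)_{\rm prim}\cong H^2(F(X))_{\rm prim}(-1)$. Combined with the Lefschetz pieces coming from the ample class on $F(X)$, this realizes $h(F(X))$ as a direct summand of $h(X)(-1)\oplus\bigoplus_j \LL(m_j)$ in $\MM_{\rm hom}$.

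The main obstacle is to promote this homological splitting to one in $\MM_{\rm rat}$. Here Theorem \ref{main} enters decisively: since $h(X)$ is finite--dimensional, the nilpotence theorem (Theorem \ref{nilp}) implies that homologically trivial self--correspondences of $X\times X$ are nilpotent, and the standard projector--lifting argument (in the same spirit as the end of the proof of Theorem \ref{main}) upgrades the cohomological splitting to one in $\MM_{\rm rat}$. A more efficient alternative would be to invoke the existing fact (in the spirit of \cite{LFu} and the literature on the Chow motive of Fano varieties of lines on cubic fourfolds) that for \emph{any} smooth cubic fourfold $X$, the motive $h(F(X))$ lies in the tensor subcategory $\langle h(X),\LL\rangle^{\otimes}\subset\MM_{\rm rat}$; combined with Theorem \ref{main}, this gives the corollary immediately.
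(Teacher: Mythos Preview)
Your main argument has a genuine gap. The Beauville--Donagi isomorphism $H^4(X)_{\rm prim}\cong H^2(F(X))_{\rm prim}(-1)$ only controls the degree~$2$ cohomology of $F(X)$; it says nothing about $H^4(F(X))$. The Fano variety $F(X)$ is a hyperk\"ahler fourfold with $h^{4,0}(F(X))=1$, so $H^4(F(X))$ has Hodge level~$4$. On the other hand every piece of $H^\ast(X)$ has Hodge level at most~$2$, Tate twists preserve Hodge level, and Lefschetz motives contribute only level~$0$ classes. Hence for elementary Hodge--theoretic reasons $h(F(X))$ is \emph{not} a direct summand of $h(X)(-1)\oplus\bigoplus_j\LL(m_j)$, not even in $\MM_{\rm hom}$, and your projector--lifting step is being applied to a splitting that does not exist. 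Any motivic description of $h(F(X))$ in terms of $h(X)$ must involve at least \emph{tensor squares} of $h(X)$, reflecting the fact that $H^{4,0}(F(X))$ is spanned by the square of the symplectic form and so lives, morally, in $\operatorname{Sym}^2 H^4(X)_{\rm prim}$.

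Your closing sentence points in the right direction and is exactly what the paper does: it simply invokes \cite{moi}, which establishes (for an arbitrary smooth cubic fourfold) that $h(F(X))$ lies in the tensor subcategory of $\MM_{\rm rat}$ generated by $h(X)$ and $\LL$. Combined with Theorem~\ref{main} and closure of finite--dimensionality under tensor products and direct summands, the corollary follows. But that external result is the entire proof, not a ``more efficient alternative'' to your first argument; and \cite{LFu} (which concerns symplectic automorphisms acting on $CH_0$) is not the relevant reference.
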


\begin{proof} This follows from the main result of \cite{moi}.
\end{proof}

\begin{remark} Corollary \ref{fano} can be extended to hyperk\"ahler fourfolds that are birational to $F(X)$. Indeed, the isomorphism of Rie\ss \cite{Rie} implies that birational hyperk\"ahler varieties have isomorphic Chow motives.
\end{remark}

\subsection{Indecomposability}

\begin{theorem}[Vial \cite{V4}] Let $M$ be a smooth projective variety of dimension $n \le 5$. Assume that $M$ has finite--dimensional motive, and that the standard Lefschetz conjecture $B(M)$ holds. Then there exists a refined Chow--K\"unneth decomposition, i.e. a set of mutually orthogonal idempotents
  \[ \Pi_{i,j}\ \ \in A^n(M\times M)\ \ \ ,\]
  such that $\Pi_{i,j}$ acts on cohomology as a projector on $\gr^j_{\wt{N}} H^i(M)$, where $\wt{N}^\ast$ is the niveau filtration of \cite{V4}. 
  \end{theorem}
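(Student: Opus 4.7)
The plan is to produce the refined K\"unneth projectors first as idempotents in the category of homological motives, and then lift them to rational equivalence using the finite--dimensionality hypothesis. Since $B(M)$ holds, the ordinary K\"unneth projectors $\bar\pi_i$ exist as homological correspondences, giving a homological Chow--K\"unneth decomposition $h(M)=\oplus_i h^i(M)$ in $\MM_{\rm hom}$, which serves as the starting point.

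To refine the piece $h^i(M)$ along the niveau filtration $\wt N^\ast$, I would use the definition of $\wt N^j H^i(M)$ as the sum of images $\Gamma_\ast H^{i-2j}(Y)$, where $\Gamma$ runs through correspondences from smooth projective varieties $Y$ of dimension $i-2j$. In the low range $n\le 5$ the filtration has very few nonzero steps (a finite and explicit list of pairs $(i,j)$), and one can identify a finite collection of auxiliary varieties $Y_{i,j}$ and algebraic cycles realising each graded piece $\gr^j_{\wt N} H^i(M)$ as a direct summand of $H^{i-2j}(Y_{i,j})$. The Lefschetz standard conjecture for $M$ (and, via K\"unneth, for the products $Y_{i,j}\times M$, where Lieberman's theorem handles the low--dimensional factors) supplies inverses to the hard Lefschetz operator and lets one construct algebraic sections of these embeddings. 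Semisimplicity of the relevant subcategory of $\MM_{\rm hom}$ (Jannsen \cite{J}) then yields a family of mutually orthogonal homological idempotents $\bar\Pi_{i,j}$ with the prescribed cohomological action.

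Finally, I would lift the $\bar\Pi_{i,j}$ to rational idempotents $\Pi_{i,j}\in A^n(M\times M)$. By Kimura's nilpotence theorem (theorem \ref{nilp}), any correspondence in $A^n(M\times M)$ that is numerically (equivalently, under $B(M)$, homologically) trivial is nilpotent. Hence the kernel of $A^n(M\times M)\to A^n(M\times M)/{\sim_{\rm hom}}$ is a nil ideal, and the standard lifting procedure for idempotents modulo such an ideal, combined with a Gram--Schmidt--type orthogonalisation argument (cf. \cite[Chap. 2]{MNP}), produces the required mutually orthogonal family $\Pi_{i,j}$ lifting $\bar\Pi_{i,j}$.

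The principal obstacle is the second step: producing the orthogonal homological projectors onto $\gr^j_{\wt N} H^i(M)$. This demands a careful case--by--case analysis for $n\le 5$ to identify appropriate witness varieties $Y_{i,j}$ and to ensure that the resulting correspondences can be split \emph{algebraically} rather than merely admitting abstract splittings of Hodge structures. This is exactly where Vial's argument exploits the bounded depth of the niveau filtration in low dimension, and where the hypothesis $n\le 5$ is essential; outside this range one would need a stronger input to control the deeper graded pieces.
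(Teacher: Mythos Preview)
The paper does not give an independent proof of this statement at all: it is attributed to Vial in the header, and the proof in the paper consists of a single sentence citing \cite[Theorems 1 and 2]{V4} and noting that the hypotheses guarantee Vial's conditions (*) and (**). In other words, the result is imported as a black box.

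Your proposal, by contrast, is an outline of how one would actually \emph{prove} the theorem, and it is broadly faithful to the strategy of \cite{V4}: build the refined K\"unneth projectors first in $\MM_{\rm hom}$ using $B(M)$ and the semisimplicity theorem of Jannsen, then lift them to $\MM_{\rm rat}$ using Kimura nilpotence. You also correctly identify the crux of the argument, namely the algebraic splitting of the niveau filtration in low dimension, and the reason the bound $n\le 5$ enters. One small inaccuracy: in your lifting step you write that under $B(M)$ numerical and homological equivalence coincide for self--correspondences of $M$; strictly speaking $B(M)$ yields $D(M)$, not $D(M\times M)$. This does not matter for the argument, since homologically trivial always implies numerically trivial, and Kimura's nilpotence theorem (theorem \ref{nilp}) only requires numerical triviality. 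So the lifting step goes through regardless, and your sketch stands as a correct reconstruction of the underlying proof---just considerably more than what the paper itself supplies.
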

  
  \begin{proof} This is a combination of \cite[Theorems 1 and 2]{V4}, since $M$ verifies conditions (*) and (**) of loc. cit.
  \end{proof}

  \begin{remark} The ``niveau filtration'' $\wt{N}^\ast$ of \cite{V4} is a variant of the geometric coniveau filtration $N^\ast$ of \cite{BO}. It is expected that there is equality $\wt{N}^\ast= N^\ast$; this is true if the standard Lefschetz conjecture is true for all smooth projective varieties \cite{V4}.
  \end{remark}

\begin{definition} Let $X$ be a cubic fourfold as in theorem \ref{main}. We define the ``transcendental motive'' $t(X)\in\MM_{\rm rat}$ as
  \[ t(X)=(X,\Pi_{4,1},0)\ \ \in \MM_{\rm rat}\ ,\]
 where the $\Pi_{i,j}$ are Vial's refined Chow--K\"unneth decomposition \cite[Theorems 1 and 2]{V4}. 
\end{definition}

\begin{remark} The fact that $t(X)$ is well--defined (i.e., independent of choices up to isomorphism) follows from \cite{V4} and \cite[Theorem 7.7.3]{KMP}. 

The motive $t(X)$ is an analogue of the ``transcendental part of the motive'' $t_2(X)$ that is defined for any (not necessarily finite--dimensional) surface in \cite{KMP}. 
Just like in the surface case, the motive $t(X)$ can actually be defined for any (not necessarily finite--dimensional) cubic fourfold, cf. \cite[(4.1)]{Ped2}.
\end{remark}

\begin{proposition}\label{indecomp} Let $X$ be a cubic fourfold as in theorem \ref{main}. The motive $t(X)$ is indecomposable, i.e. any submotive is either $0$ or equal to $t(X)$.
\end{proposition}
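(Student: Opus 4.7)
The plan is to follow the template used for the indecomposability of the transcendental motive $t_2(S)$ of a surface with finite--dimensional motive, cf. \cite[Section 7]{KMP}. The idea is that under finite--dimensionality, indecomposability of $t(X)$ in $\MM_{\rm rat}$ is controlled by the endomorphism algebra of the homological (equivalently numerical) realization, which for $t(X)$ is the transcendental Hodge structure $H^4_{\rm tr}(X):=\gr^1_{\wt N} H^4(X,\QQ)$.

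First, since $X$ has finite--dimensional motive (theorem \ref{main}), the direct summand $t(X)$ is also finite--dimensional. Kimura's nilpotence theorem (theorem \ref{nilp}) then implies that the kernel $I$ of the natural surjection
\[ \hbox{End}_{\MM_{\rm rat}}(t(X))\ \twoheadrightarrow\ \hbox{End}_{\MM_{\rm num}}(t(X)) \]
is a nil ideal. An idempotent contained in a nil ideal is automatically zero; hence any idempotent of $\hbox{End}_{\MM_{\rm rat}}(t(X))$ that reduces to $0$ (resp.\ $1$) modulo $I$ equals $0$ (resp.\ $1$). Consequently, $t(X)$ is indecomposable in $\MM_{\rm rat}$ if and only if $\hbox{End}_{\MM_{\rm num}}(t(X))$ has no nontrivial idempotents. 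Moreover, $X$ verifies the Lefschetz standard conjecture (cf. the proof of lemma \ref{split}), whence numerical and homological equivalence agree on powers of $X$, and so
\[ \hbox{End}_{\MM_{\rm num}}(t(X))=\hbox{End}_{\MM_{\rm hom}}(t(X))\ . \]

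Next, the Betti realization yields an inclusion
\[ \hbox{End}_{\MM_{\rm hom}}(t(X))\ \hookrightarrow\ \hbox{End}_{\rm HS}\bigl(H^4_{\rm tr}(X)\bigr)\ . \]
The right hand side is the endomorphism algebra of a polarized $\QQ$--Hodge structure of K3 type (Hodge numbers $(1,h,1)$ for some $h\ge 0$). By Zarhin's classical theorem --- whose proof uses only the existence of a polarization together with the Hodge numbers $(1,*,1)$ --- this algebra is a field (either totally real or CM), and hence has no nontrivial idempotents.

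Combining the two steps, $\hbox{End}_{\MM_{\rm rat}}(t(X))$ is a ring whose quotient by a nil ideal embeds into a field; such a ring has no nontrivial idempotents, which is precisely the claim that $t(X)$ is indecomposable. The only genuinely non--formal input in this strategy is Zarhin's theorem, and I expect this to be the main (albeit mild) obstacle: one has to verify that Zarhin's original argument for K3 surfaces carries over verbatim to the transcendental part of $H^4$ of a cubic fourfold. This should be the case since that proof only uses the Hodge numbers and the existence of a polarization, both of which are present here.
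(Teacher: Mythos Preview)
Your argument is correct and follows essentially the same route as the paper: use finite--dimensionality and Kimura nilpotence to reduce indecomposability in $\MM_{\rm rat}$ to the absence of nontrivial idempotents in $\hbox{End}_{\MM_{\rm hom}}(t(X))$, then settle the latter via the irreducibility of the Hodge structure $H^4_{\rm tr}(X)$. The one difference is that where you appeal to Zarhin, the paper argues more directly: since $h^{3,1}(X)=1$, any proper sub--Hodge structure of $H^4_{\rm tr}(X)$ must be purely of type $(2,2)$, hence (by the Hodge conjecture, which holds for cubic fourfolds) algebraic, hence orthogonal to $H^4_{\rm tr}(X)$ and so zero. This elementary observation already gives the ``no nontrivial idempotents'' conclusion you need, so your worry about transporting Zarhin's proof to this setting is unnecessary --- though for the record, Zarhin's argument does go through verbatim for any polarized rational Hodge structure with Hodge numbers $(1,\ast,1)$.
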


\begin{proof} Let $M\in\MM_{\rm rat}$ be a submotive of $t(X)$. Then
  \[  0\ \subset\ H^\ast(M)\ \subset\ H^\ast(t(X))=H^4_{tr}(X)\ ,\]
  where $H^4_{tr}(X)\subset H^4(X)$ is as in the proof of corollary \ref{corbloch}. The cup--product argument of the proof of corollary \ref{corbloch}, plus the fact that $h^{3,1}(X)=1$, implies that the Hodge structure $H^4_{tr}(X)$ is indecomposable. That is, $H^\ast(M)$ is either $0$ or all of $H^4_{tr}(X)$. In the first case, we conclude that $M=0$ (there are no finite--dimensional phantom motives). In the second case, we conclude (again using finite--dimensionality) that $M=t(X)$, since they coincide in $\MM_{\rm hom}$.
\end{proof}

\begin{corollary} Let $X$ be a cubic fourfold as in theorem \ref{main}. Suppose $G\subset\hbox{Aut}(X)$ is a finite group of finite--order automorphisms such that
  \[  g_\ast\not=\hbox{id}\colon\ \ H^{3,1}(X)\ \to\ H^{3,1}(X) \]
  for some $g\in G$. Let $Y\to X/G$ be a resolution of singularities of the quotient. Then
    \[ A^j_{hom}(Y)=0\ \ \hbox{for\ all\ }j\ .\]  
\end{corollary}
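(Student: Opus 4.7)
The strategy is to exhibit the motive $M := (X,\pi_G)\in\MM_{\rm rat}$, where $\pi_G := \frac{1}{|G|}\sum_{g\in G}\Gamma_g$ is the $G$-averaging projector, as a direct sum of Lefschetz motives; the conclusion $A^j_{hom}(Y)=0$ will follow.

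I would first show that $(\pi_G)_\ast$ vanishes on $H^{3,1}(X)$: the one-dimensional space $H^{3,1}(X)$ carries a character $\chi\colon G\to\C^\ast$ which is non-trivial by hypothesis, so $\sum_{g}\chi(g)=0$ by orthogonality. Arguing as in the proof of Proposition \ref{indecomp}, the Hodge structure $H^4_{tr}(X)$ is indecomposable; since $(\pi_G)_\ast$ is a rational Hodge endomorphism with non-trivial kernel, it vanishes identically on $H^4_{tr}(X)$. The remaining cohomology of $X$ is of Tate type, so $H^\ast(M)=H^\ast(X)^G$ consists entirely of algebraic $(p,p)$-classes (using the Hodge conjecture for $X$). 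It follows that $M\cong\bigoplus_i\LL^{\otimes n_i}$ in $\MM_{\rm hom}$; since $M$ is a direct summand of $h(X)$, which is finite-dimensional by Theorem \ref{main}, Kimura's nilpotence theorem (Theorem \ref{nilp}) upgrades this to an isomorphism in $\MM_{\rm rat}$. Hence $A^j_{hom}(X)^G = A^j_{hom}(M) = 0$ for all $j$.

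To pass to the resolution $p\colon Y\to X/G$: since $X/G$ has quotient (hence rational) singularities, $A^j_{hom}(X/G)\cong A^j_{hom}(X)^G = 0$. Any $a\in A^j_{hom}(Y)$ satisfies $p_\ast a=0$ in $A^j(X/G)$, so $a$ is supported on the exceptional divisor; its components are projective bundles over (resolutions of) the fixed loci $X^H$ for subgroups $H\leq G$. The main obstacle is to show that their contribution to $A^j_{hom}(Y)$ also vanishes; this should follow by iterating the character-averaging argument on the $N_G(H)/H$-actions on the $X^H$, and is already trivial in the motivating situation where $G$ acts locally by pseudo-reflections (so $X/G$ is smooth and $Y=X/G$), as is the case for the natural $\ZZ/3$-action in the family of Theorem \ref{main}.
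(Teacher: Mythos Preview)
Your argument through $A^j_{hom}(X)^G=0$ is correct and runs parallel to the paper's proof. The paper is more compact: it directly invokes Proposition~\ref{indecomp}, observing that $t(X)^G:=(X,\Pi_{4,1}\circ\pi_G,0)$ is a submotive of the indecomposable motive $t(X)$ and hence is $0$ or all of $t(X)$, the latter being excluded by the hypothesis on $H^{3,1}$. Your route --- indecomposability of the Hodge structure $H^4_{tr}(X)$, followed by a nilpotence upgrade to $\MM_{\rm rat}$ --- is the same argument unwound one level. (One small point: in your step on $H^4_{tr}$ you should say that $(\pi_G)_\ast$ is an \emph{idempotent} with non-trivial kernel; a general endomorphism of an indecomposable Hodge structure with non-trivial kernel need not vanish.)

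The gap you flag in your final paragraph --- passing from $A^j_{hom}(X/G)=0$ to $A^j_{hom}(Y)=0$ for a resolution $Y\to X/G$ --- is real, and your proposed fix via ``iterating the character-averaging argument on the $X^H$'' does not work as stated: the fixed loci $X^H$ can be curves or surfaces sitting inside the cubic fourfold with a priori non-trivial $A^\ast_{hom}$, and there is no reason the residual $N_G(H)/H$-action should kill this. The paper's own proof does not address this step either; it simply opens with the assertion $A^j_{hom}(Y)\cong A^j\bigl(t(X)^G\bigr)$, which amounts to precisely the identification $A^j_{hom}(Y)\cong A^j_{hom}(X/G)$ that you are trying to justify. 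So you have not introduced a new gap --- you have made explicit one that the paper leaves implicit. In the motivating case you mention (the natural $\ZZ/3\ZZ$-action, where $X/G\cong\PP^4$ is already smooth and one may take $Y=X/G$) there is of course nothing to do.
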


\begin{proof} We have
  \[ A^j_{hom}(Y)\cong A^j(t(X)^G)\ ,\]
  where we define
  \[  t(X)^G:= (X,\Pi_{4,1}\circ {\displaystyle \sum_{g\in G}} \Gamma_g,0)\ \ \in \MM_{\rm rat}\ .\]
  This is a submotive of $t(X)$; as such, it must be $0$ or all of $t(X)$. The second possibility can be excluded, because it would imply
    \[ H^{3,1}_{}(X)^G=H^{3,1}_{}(X)\ ,\]
  contradicting the hypothesis.
    \end{proof}

\subsection{Smash--equivalence}

\begin{definition} Let $X$ be a smooth projective variety. A cycle $a\in A^i(X)$ is called {\em smash--nilpotent\/} 
if there exists $m\in\NN$ such that
  \[ \begin{array}[c]{ccc}  a^m:= &\undermat{(m\hbox{ times})}{a\times\cdots\times a}&=0\ \ \hbox{in}\  A^{mi}(X\times\cdots\times X)_{}\ .
  \end{array}\]
  \vskip0.6cm

We will write $A^i_\otimes(X)\subset A^r(X)$ for the subgroup of smash--nilpotent cycles.
\end{definition}

\begin{conjecture}[Voevodsky \cite{Voe}]\label{voe} Let $X$ be a smooth projective variety. Then
  \[  A^i_{num}(X)\ \subset\ A^i_\otimes(X)\ \ \ \hbox{for\ all\ }i\ .\]
  \end{conjecture}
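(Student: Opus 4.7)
The plan is to reduce Voevodsky's conjecture to the Kimura--O'Sullivan finite--dimensionality conjecture, which is the standard (and essentially only known) approach to nilpotence statements of this kind. Kimura proved that if $h(X)\in\MM_{\rm rat}$ is finite--dimensional, then every numerically trivial cycle on $X$ is smash--nilpotent; in fact the paper's Theorem \ref{nilp} is the ``composition'' incarnation of exactly this phenomenon, and by a standard argument (apply Theorem \ref{nilp} to the correspondence $a\times a\times\cdots$ on $X^m$ viewed via Jannsen's reformulation \cite[Corollary 3.9]{J4}) one recovers $A^i_{num}(X)\subset A^i_\otimes(X)$. Consequently, for every $X$ whose motive is known to be finite--dimensional --- abelian varieties, varieties dominated by products of curves, surfaces with $p_g=0$ of the types listed in the remark following Theorem \ref{nilp}, and, thanks to Theorem \ref{main}, the new family of cubic fourfolds considered here --- the conjecture already holds.

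To make this strategy produce an unconditional proof in the generality stated, one would proceed in two stages. First, reduce to homologically trivial cycles: the gap $A^i_{num}/A^i_{hom}$ would vanish if one knew the standard conjecture $D(X\times X)$ (numerical $=$ homological equivalence). Second, prove that any $a\in A^i_{hom}(X)$ is smash--nilpotent. Here I would try to decompose $a$ using a Chow--K\"unneth decomposition of $h(X)$ into motivic summands on each of which a finite--dimensionality (or, more weakly, a motivic nilpotence) property is known: e.g.\ Kimura--finiteness for the ``curve--like'' pieces via Jannsen's and Vial's refined Chow--K\"unneth constructions, together with ad hoc arguments in extremal codimensions (Voevodsky's original results for $i=0,1,\dim X$ on abelian varieties, Sebastian's extension, and the recent work on $0$--cycles of smooth projective varieties). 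An alternative route is to try to upgrade the nilpotence in the subcategory $\MM^\circ_{\rm hom}\subset \MM_{\rm hom}$ of Lefschetz--type motives (used in the proof of Lemma \ref{split}) to a nilpotence in $\MM_{\rm rat}$ by a spreading/Voisin--type argument analogous to Proposition \ref{spread}.

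The main obstacle --- and the reason this statement is formulated as a conjecture rather than a theorem --- is precisely the finite--dimensionality step: as recorded in the remark after Theorem \ref{nilp}, finite--dimensionality is presently \emph{unknown} for any motive that is not already in the thick tensor subcategory generated by motives of curves, and no technique is known to produce smash--nilpotence of an arbitrary numerically trivial cycle without (implicitly or explicitly) establishing finite--dimensionality of some auxiliary motive dominating $X$. An unconditional proof of the conjecture in full generality therefore seems to require genuinely new input, for instance an unconditional proof of Kimura's conjecture for varieties whose cohomology contains motivic pieces not of abelian type (cf.\ \cite[7.6]{D}), or a direct construction of smash--nilpotent cobordisms for numerically trivial cycles bypassing the motivic framework altogether. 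Absent such input, the best one can do is prove the conjecture in cases where finite--dimensionality can be established --- which is exactly the strategy exploited, for the new family of cubic fourfolds, in the Proposition \ref{corsmash} mentioned in the Introduction.
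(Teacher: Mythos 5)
This statement is labelled a \emph{conjecture} in the paper (Voevodsky's smash--nilpotence conjecture, \cite{Voe}); the paper offers no proof of it and none is expected, so there is nothing to compare your argument against. You have correctly recognized this: your ``proposal'' is not a proof but a (largely accurate) survey of why the statement is open --- the reduction to Kimura--O'Sullivan finite--dimensionality, the fact that finite--dimensionality is known only for motives of abelian type, and the observation that the paper instead establishes the special case recorded in Proposition~\ref{corsmash} for products of the cubic fourfolds of Theorem~\ref{main}. One small correction to your discussion: the implication recorded in \cite[Th\'eor\`eme 3.33]{An}, and cited in the remark following the conjecture, runs the other way from what your opening paragraph suggests --- smash--nilpotence (Voevodsky) \emph{implies} finite--dimensionality (Kimura), strictly, rather than being a consequence of it; what Kimura's nilpotence theorem (Theorem~\ref{nilp}) gives for a finite--dimensional motive is nilpotence under \emph{composition}, and passing from that to smash--nilpotence of an arbitrary numerically trivial cycle is not the formal step you sketch but rather requires the sort of case--by--case analysis carried out in Proposition~\ref{corsmash}.
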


\begin{remark} It is known \cite[Th\'eor\`eme 3.33]{An} that conjecture \ref{voe} implies (and is strictly stronger than) Kimura's conjecture ``all varieties have finite--dimensional motive''.
For partial results concerning conjecture \ref{voe}, cf. \cite{KS}, \cite{Seb2}, \cite{Seb}, \cite[Theorem 3.17]{V3}, \cite{moismash}.
\end{remark}

The results of this note give some new examples where Voevodsky's conjecture is verified:

\begin{proposition}\label{corsmash} Let $Z$ be a product 
  \[ Z=X_1\times X_2\ ,\]
  where the $X_j$ are smooth cubic fourfolds as in theorem \ref{main}. Then
   \[ A^i_\otimes(Z)=A^i_{num}(Z)\ \ \hbox{for\ all\ }i\not=4\ .\]
  \end{proposition}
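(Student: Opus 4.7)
The plan is to combine the finite--dimensionality of $h(Z)$ with the Kuga--Satake embedding of corollary \ref{KSab} to reduce Voevodsky's conjecture on $Z$ to a question on cycles on an abelian variety, where known results apply.

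First, $h(Z) = h(X_1)\otimes h(X_2)$ is the tensor product of finite--dimensional motives of abelian type (theorem \ref{main}), and hence is itself finite--dimensional of abelian type. In particular the standard Lefschetz conjecture holds for $Z$, so numerical and homological equivalence agree on $Z$, and it suffices to prove $A^i_{hom}(Z) \subset A^i_\otimes(Z)$ for all $i\neq 4$.

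Next, since the standard conjectures hold for $X_j$ and since $H^{2a}(X_j)$ is purely of Tate type for $a\neq 2$ (cubic fourfolds have Picard number one), one obtains, using Vial's refined Chow--K\"unneth decomposition, a motivic splitting
  \[ h(X_j)\ =\ t(X_j)\ \oplus\ \bigoplus_k \LL^{n_k}\ \ \hbox{in}\ \MM_{\rm rat}\ , \]
where $t(X_j)$ has cohomology concentrated in degree $4$, equal to $H^4_{tr}(X_j)$. Tensoring over $j=1,2$ gives a decomposition of $h(Z)$ as a direct sum of (i) pure Lefschetz motives, (ii) Tate twists of $t(X_1)$ and of $t(X_2)$, and (iii) the ``purely transcendental'' summand $t(X_1)\otimes t(X_2)$. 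I would treat each type of summand separately. The pure Lefschetz summands contribute only Tate classes in cohomology; their numerically trivial parts vanish, so there is nothing to prove.

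For the mixed summands $t(X_j)\otimes\LL^m$ one has $A^i(t(X_j)\otimes \LL^m) = A^{i-m}(t(X_j))$; by corollary \ref{KSab}, $t(X_j)$ embeds as a direct summand of $h^{2g-2}(A_j)(3-g)$ for an abelian variety $A_j$ of dimension $22$. Cycles on $t(X_j)$ then correspond to cycles on $A_j$ lying in a specific piece of the Beauville decomposition, and the theorem of Kahn--Sebastian \cite{KS} (together with the refinements of Sebastian \cite{Seb}) yields smash--nilpotence for the numerically trivial cycles that arise.

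The core difficulty is the purely transcendental summand $t(X_1)\otimes t(X_2)$, whose cohomology is concentrated in degree $8$. Precisely because $i\neq 4$, one has $H^{2i}(t(X_1)\otimes t(X_2))=0$, and so every cycle of codimension $i$ on this summand is homologically (hence, by the first step, numerically) trivial. By corollary \ref{KSab}, this summand is a direct summand of a Tate twist of $h(A_1\times A_2)$, and the cycles in question correspond to cycles on $A_1\times A_2$ lying in Beauville components strictly away from the middle weight; for these, Kahn--Sebastian and its refinements apply to yield smash--nilpotence. The restriction $i\neq 4$ is then forced by the fact that middle--codimension cycles on $Z$ would correspond to the middle Beauville component on $A_1\times A_2$, where Voevodsky's conjecture remains open in general.
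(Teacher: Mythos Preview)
Your overall strategy coincides with the paper's: use corollary \ref{KSab} to embed $h(Z)$ as a direct summand of a sum of Lefschetz motives and Tate twists of $h^2(A)$ and $h^4(A\times A)$, and then analyse the resulting Beauville components on the abelian side. However, the argument as written has a real gap at the decisive step.

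Concretely, $t(X_1)\otimes t(X_2)$ sits inside $h^4(A\times A)(2)$, so its $A^i$ lands in $A^{i-2}_{(2i-8)}(A\times A)$. For $i\ge 7$ and $i\le 3$ this vanishes by Beauville's bounds, but for $i=5,6$ one obtains $A^3_{(2)}$ and $A^4_{(4)}$ on a $44$--dimensional abelian variety. Neither \cite{KS} (which treats \emph{skew}, i.e.\ odd--$s$, Beauville pieces on abelian \emph{threefolds}) nor \cite{Seb} (which, as used here, is a statement about $1$--cycles) applies to these groups as they stand. The paper's proof closes this gap by invoking K\"unnemann's hard Lefschetz isomorphism for Chow motives of abelian schemes \cite{Kun}, yielding $A^3_{(2)}\cong A^{2g-1}_{(2)}$ and $A^4_{(4)}\cong A^{2g}_{(4)}$; only after this reduction to $1$-- and $0$--cycles do \cite{Seb} and Voevodsky's theorem on algebraically trivial cycles give smash--nilpotence. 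The same issue recurs for your mixed summands $t(X_j)\otimes\LL^m$, where $A^2_{(2)}(A_j)$ must likewise be moved to $A^g_{(2)}(A_j)$ via \cite{Kun}. You should make this K\"unnemann step explicit; the phrase ``Beauville components strictly away from the middle weight'' does not substitute for it, and the appeal to \cite{KS} is misplaced. Finally, the obstruction at $i=4$ is not a vague ``middle weight'' phenomenon: one lands in $A^2_{(0)}(A\times A)$, and the vanishing of $A^2_{(0)}\cap A^2_{num}$ is exactly one of Beauville's open conjectures, as the paper notes.
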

  
  \begin{proof} We have seen (in the proof of corollary \ref{KSab}) there exists a map of motives
   \[ h(X_j)\ \to\ h^2(A)(1)\oplus \bigoplus_{m=0}^4 h(\hbox{Sp}\,\C)(m)\ \ \ \hbox{in}\ \MM_{\rm rat} \]
  that admits a left--inverse.
  It follows there is also a map
  \[ h(Z)= h(X_1\times X_2)\ \to\ h^4(A\times A)(2)\oplus \bigoplus_{m^\prime=1}^5 h^2(A)(m^\prime)\oplus \bigoplus_{m^{\prime\prime}} h(\hbox{Sp}\,\C)(m^{\prime\prime})\  \ \ \hbox{in}\ \MM_{\rm rat}\ \]
  admitting a left--inverse.
  In particular, this implies there is a correspondence--induced injection
   \begin{equation}\label{inject} A^i_{num}(Z)\ \hookrightarrow\ A^{i-2}_{(2i-8)}(A\times A)\oplus \bigoplus_{m^\prime} (\pi_2^A)_\ast A^{i-m^\prime}(A)\ .\end{equation}
   By general properties of Beauville's splitting \cite{Beau}, we know that the term $(\pi_2^A)_\ast A^{i-m^\prime}(A)$ is $0$ unless $i-m^\prime$ is $1$ or $2$. For $i-m^\prime=1$, we have
   \[    (\pi_2^A)_\ast A^{1}(A)  = A^1_{(0)}(A) \ ,\]
   which is known to have trivial intersection with $A^1_{num}(A)$. For $i-m^\prime=2$, we have
    \[    (\pi_2^A)_\ast A^{2}(A)  = A^2_{(2)}(A) \ \xrightarrow{\cong}\ A^g_{(2)}(A)\ ,  \]
    where the isomorphism is given by K\"unnemann's hard Lefschetz result \cite{Kun}, which implies
    \[  (\pi_2^A)_\ast A^{2}(A)\ \subset\ A^2_\otimes(A)\ .\] 
    It remains to analyze the first summand of the right--hand side of (\ref{inject}). 
   For $i>6$ we have that $2i-8>i-2$ and this summand vanishes \cite{Beau}. For $i=6$, this summand is
   \[ A^4_{(4)}(A\times A)\ \xrightarrow{\cong}\ A^{2g}_{(4)}(A\times A)\ ,\]
   which proves this summand is smash--nilpotent. For $i=5$, this summand is
   \[ A^3_{(2)}(A\times A)\ \xrightarrow{\cong}\ A^{2g-1}_{(2)}(A\times A) \ ,\]
   and so this summand is again smash--nilpotent, because homologically trivial $1$--cycles on abelian varieties are smash--nilpotent \cite{Seb}. 
   
   This proves the proposition: for any $i\not=4$, we have checked that the injection (\ref{inject}) sends $A^i_{num}(Z)$ to something smash--nilpotent. The left inverse of (\ref{inject}) being given by a correspondence, this implies that any element in $A^i_{num}(Z)$ is smash--nilpotent.
    
    (NB: this proof breaks down for $i=4$, because it is not known whether 
     \[ A^2_{(0)}(A\times A)\cap A^2_{num}(A\times A) = 0\ ,\]
     which is one of Beauville's conjectures.)   
         \end{proof}

\vskip1cm
\begin{nonumberingt} The ideas developed in this note grew into being during the Strasbourg 2014---2015 groupe de travail based on the monograph \cite{Vo}. Thanks to all participants of this groupe de travail for the stimulating atmosphere. Thanks to Bert van Geemen for helpful email correspondence, and thanks to the referee for insightful remarks and suggestions that significantly improved this note.
Many thanks to Yasuyo, Kai and Len for being dedicated members of the Schiltigheim Math Research Institute.
\end{nonumberingt}

\vskip1cm

\end{document}